\newtheorem{theorem}{Theorem}[section]
\newtheorem{corollary}[theorem]{Corollary}
\newtheorem{lemma}[theorem]{Lemma}
\theoremstyle{definition}
\newtheorem{definition}[theorem]{Definition}
\newtheorem{remark}[theorem]{Remark}
\numberwithin{equation}{section}
\begin{document}

\title{On the composition operators on Besov and Triebel-Lizorkin spaces of
power weights}
\author{Douadi Drihem \\
M'sila University, Department of Mathematics\\
Laboratory of Functional Analysis and Geometry of Spaces\\
\ M'sila 28000, Algeria\\
E-mail: douadidr@yahoo.fr, douadi.drihem@univ-msila.dz}
\date{}
\maketitle

\begin{abstract}
Let $G:\mathbb{R\rightarrow R}$ be a continuous function. Under some
assumptions on $G$, $s,\alpha ,p$ and $q$ we prove that 
\begin{equation*}
\{G(f):f\in A_{p,q}^{s}(\mathbb{R}^{n},|\cdot |^{\alpha })\}\subset
A_{p,q}^{s}(\mathbb{R}^{n},|\cdot |^{\alpha })
\end{equation*}%
implies that $G$ is a linear function. Here $A_{p,q}^{s}(\mathbb{R}%
^{n},|\cdot |^{\alpha })$ stands  either for the  Besov space $B_{p,q}^{s}(%
\mathbb{R}^{n},|\cdot |^{\alpha })$ or for the Triebel-Lizorkin space $%
F_{p,q}^{s}(\mathbb{\ R}^{n},|\cdot |^{\alpha })$. These spaces unify and
generalize many classical function spaces such as Sobolev spaces of power
weights.
\end{abstract}


\baselineskip=17pt



\renewcommand{\thefootnote}{}

\footnote{%
2020 \emph{Mathematics Subject Classification}: Primary 47H30; Secondary
46E35.}

\footnote{\emph{Key words and phrases}: Besov spaces, Triebel-Lizorkin
spaces, power weights, Nemytzkij operators.}

\renewcommand{\thefootnote}{\arabic{footnote}} \setcounter{footnote}{0}


\section{Introduction}

Let $G:\mathbb{R\rightarrow R}$ be a $C^{\infty }$\ function. Suppose that $%
1\leq p<\frac{n}{m}\ $for $m\geq 3$ and $1<p<\frac{n}{2}$ for $m=2$. In 1978
Dahlberg \cite[Theorem 1]{Da79} proved that%
\begin{equation*}
G(f)\in W_{p}^{m}(\mathbb{R}^{n}),\quad f\in W_{p}^{m}(\mathbb{R}^{n}),
\end{equation*}%
implies $G(t)=ct$ for some $c\in \mathbb{R}$. More precisely, there is no
non-trivial function $G$ which acts via left composition on $W_{p}^{m}(%
\mathbb{R}^{n})$ spaces, with $1\leq p<\frac{n}{m}\ $for $m\geq 3$ and $1<p<%
\frac{n}{2}$ for $m=2$.

The extension of the Dahlberg result to Besov and Triebel-Lizorkin spaces is
given by Bourdaud in \cite{Bo93} and \cite{Bo931}, Runst in \cite{Ru86}, and
Sickel in \cite{Si97}, \cite{Si98} and \cite{Si98-1}. For a continuous
function $G:\mathbb{R}\rightarrow \mathbb{R}$ and Lebesgue-measurable $f$ we
shall call the operator%
\begin{equation*}
T_{G}:f\rightarrow G(f),
\end{equation*}%
the composition operator or the Nemytzkij operator. \ Further results
concerning the composition operators in Besov and Triebel-Lizorkin spaces
are given \cite{BK}, \cite{BCS06}, \cite{BMS10} and \cite{RS96}.

Recently the author in \cite{Dr21} gave  necessary and sufficient
conditions on $G$ such that 
\begin{equation}
T_{G}(W_{p}^{m}(\mathbb{R}^{n},|\cdot |^{\alpha }))\subset W_{p}^{m}(\mathbb{%
\ R}^{n},|\cdot |^{\alpha }),  \label{Dah1}
\end{equation}
with some suitable assumptions on $m,p$ and $\alpha $. More precisely, he
proved the following result. Let $m=2,3,...$ and let $1<p<\infty ,0\leq
\alpha <n(p-1)$. Assume that $m>\frac{n+\alpha }{p}$. Then the composition
operator $T_{G}$ satisfies $\mathrm{\eqref{Dah1}}$ if and only if $G$
satisfies the following conditions: 
\begin{equation*}
G(0)=0\quad \text{and}\quad G^{(m)}\in \mathbb{L}_{\mathrm{loc}}^{p}(\mathbb{%
\ R}).
\end{equation*}%
%
%
%
%
%
%
%
%
%
%
%
For the classical Sobolev spaces, see \cite{Bo91}, \cite{Bo10}, \cite%
{Igari65} and \cite{MM}.

The motivation to study the problem of composition on function spaces comes
from applications to partial differential equations, where many nonlinear
equations are given by a composition operator, for example the nonlinear
equations 
\begin{equation*}
\left\{ 
\begin{array}{ccc}
\partial _{t}f(t,x)-\Delta f(t,x)= & T_{G}(f(t,x)), & \quad (t,x)\in \mathbb{%
\ R}^{+}\mathbb{\times R}^{n}. \\ 
f(0,x)=f_{0}(x). &  & 
\end{array}%
\right.
\end{equation*}%
To study this equation in functional spaces such as Sobolev spaces we need
to estimate the nonlinear term $T_{G}$ in such spaces, see for example \cite%
{F98}.

Another motivation to study the composition operators in function spaces can
be found in \cite{CFZ11} and the references therein.

This paper is a continuation of the previous paper \cite{Dr21} written by
the same author. We will study the Dahlberg problem on Besov and
Triebel-Lizorkin spaces with power weights. One of the main difficulties to
study this problem is that the norm of the $A_{p,q}^{s}(\mathbb{R}%
^{n},|\cdot |^{\alpha })$ spaces with $\alpha \neq 0$ is not translation
invariant, so some new techniques must be developed. Our main theorem of
this paper is the following.

\begin{theorem}
\label{Triviality1}Let\ $1<p<\infty ,0<q\leq \infty \ $and $0\leq \alpha
<n(p-1)$. Let $G\in C^{2}(\mathbb{R})$.\ Suppose 
\begin{equation*}
1+\frac{1}{p}<s<\frac{n+\alpha }{p}
\end{equation*}%
and 
\begin{equation}
T_{G}(\mathbb{A}_{p,q}^{s}(\mathbb{R}^{n},|\cdot |^{\alpha }))\subset 
\mathbb{A}_{p,q}^{s}(\mathbb{R}^{n},|\cdot |^{\alpha }).  \label{Condition2}
\end{equation}%
Then 
\begin{equation}
G(t)=ct,\quad t\in \mathbb{R}  \label{linear}
\end{equation}%
for some constant $c$.
\end{theorem}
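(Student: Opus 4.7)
The plan is to argue by contradiction that $G''\equiv 0$; together with $G(0)=0$ and $G\in C^{2}$ this forces $G(t)=G'(0)\,t$, which is the required linearity. My strategy has four stages and follows the Dahlberg--Bourdaud--Sickel scheme, with the loss of translation invariance compensated by a cutoff procedure.

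First I would dispose of the constant term. Because $\alpha\geq 0$, the weight $|\cdot|^{\alpha}$ is non-integrable at infinity, so no nonzero constant belongs to $L^{p}(\mathbb{R}^{n},|\cdot|^{\alpha})$, a fortiori not to $\mathbb{A}_{p,q}^{s}(\mathbb{R}^{n},|\cdot|^{\alpha})$. Taking $f\equiv 0$ in \eqref{Condition2} therefore forces $G(0)=0$. The assumption $s>1+1/p$ supplies enough regularity of elements of $\mathbb{A}_{p,q}^{s}(\mathbb{R}^{n},|\cdot|^{\alpha})$ for $G(f)$ to be well defined and for a pointwise Taylor expansion of order two,
\begin{equation*}
G(t_{0}+u)=G(t_{0})+G'(t_{0})\,u+\tfrac{1}{2}G''(t_{0})\,u^{2}+u^{2}\,r_{t_{0}}(u),\qquad r_{t_{0}}(u)\to 0\text{ as }u\to 0,
\end{equation*}
to be used below.

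Second, I would convert the qualitative hypothesis \eqref{Condition2} into a quantitative one. By a standard closed-graph / uniform-boundedness argument in this subject, $T_{G}$ is continuous on a zero neighbourhood: there exist $\varepsilon_{0},C>0$ with $\|G(f)\|_{\mathbb{A}_{p,q}^{s}(\mathbb{R}^{n},|\cdot|^{\alpha})}\leq C\|f\|_{\mathbb{A}_{p,q}^{s}(\mathbb{R}^{n},|\cdot|^{\alpha})}$ for $\|f\|_{\mathbb{A}_{p,q}^{s}(\mathbb{R}^{n},|\cdot|^{\alpha})}\leq\varepsilon_{0}$. Inserting the Taylor expansion at $t_{0}=0$ rearranges to $\frac{1}{2}G''(0)f^{2}=G(f)-G'(0)f-f^{2}r_{0}(f)$, and then $G''(0)\neq 0$ would force $f^{2}\in\mathbb{A}_{p,q}^{s}(\mathbb{R}^{n},|\cdot|^{\alpha})$ for every admissibly small $f$.

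Third, and most delicately, I would construct a test function $f\in\mathbb{A}_{p,q}^{s}(\mathbb{R}^{n},|\cdot|^{\alpha})$ with $f^{2}\notin\mathbb{A}_{p,q}^{s}(\mathbb{R}^{n},|\cdot|^{\alpha})$, using the whole range $s<(n+\alpha)/p$. A bare cut-off power $|x|^{-\gamma}$ suffices only in the smaller range $s<(n+\alpha)/(2p)$, so for the full range one needs an atomic/wavelet series $f=\sum_{k}\lambda_{k}\varphi_{k}$ built from bumps concentrated on dyadic annuli shrinking to the origin, with coefficients $\lambda_{k}$ tuned so that the sequence-space norm encoded by the weighted atomic decomposition is finite for $f$ but not for $f^{2}$. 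Because $|x|^{\alpha}$ is essentially constant on each such annulus, the weight only shifts the exponents in the atomic norm by a controlled amount and the obstruction survives. Combined with the previous step this yields $G''(0)=0$.

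Finally I would upgrade this to $G''(t_{0})=0$ for every $t_{0}\in\mathbb{R}$. Translation invariance being unavailable, one cannot simply consider $f+t_{0}$; instead, take $\chi\in C_{c}^{\infty}(\mathbb{R}^{n})$ with $\chi\equiv 1$ on a neighbourhood of $\operatorname{supp} f$, where $f$ is the test function from step three, and put $g_{\varepsilon}=t_{0}\chi+\varepsilon f$. Because $\chi$ is smooth and compactly supported, $\chi\in\mathbb{A}_{p,q}^{s}(\mathbb{R}^{n},|\cdot|^{\alpha})$, so $g_{\varepsilon}$ and $t_{0}\chi$ are admissible and both $G(g_{\varepsilon})$ and $G(t_{0}\chi)$ lie in $\mathbb{A}_{p,q}^{s}(\mathbb{R}^{n},|\cdot|^{\alpha})$; on $\operatorname{supp} f$ one has $g_{\varepsilon}=t_{0}+\varepsilon f$, and the Taylor expansion at $t_{0}$ reproduces the previous quadratic obstruction with $G''(t_{0})$ in place of $G''(0)$. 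The main obstacle throughout is step three: producing, via the weighted atomic decomposition, a function $f\in\mathbb{A}_{p,q}^{s}(\mathbb{R}^{n},|\cdot|^{\alpha})$ whose square leaves the space in the full subcritical range $s<(n+\alpha)/p$, which is precisely where the weighted theory is put to work.
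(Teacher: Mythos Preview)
Your Step~2 contains a genuine gap. From the Taylor identity
\[
G(f)=G'(0)\,f+\tfrac{1}{2}G''(0)\,f^{2}+f^{2}\,r_{0}(f)
\]
together with $G(f),f\in\mathbb{A}_{p,q}^{s}(\mathbb{R}^{n},|\cdot|^{\alpha})$ you can only conclude that the sum $\tfrac{1}{2}G''(0)\,f^{2}+f^{2}\,r_{0}(f)$ lies in the space. Isolating $f^{2}$ would require $f^{2}\,r_{0}(f)\in\mathbb{A}_{p,q}^{s}(\mathbb{R}^{n},|\cdot|^{\alpha})$, but since $G$ is merely $C^{2}$ the remainder $r_{0}$ is only continuous, so $u\mapsto u^{2}r_{0}(u)$ is another composition operator with no a~priori mapping property on a space of smoothness $s>1$; even if $f$ is uniformly small, a pointwise bound $|r_{0}(f)|\leq\varepsilon$ controls only the $L^{p}$ norm, not the $\mathbb{A}_{p,q}^{s}$ norm. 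The same defect reappears in Step~4. Incidentally, no closed-graph or continuity argument is needed: producing a single $f$ in the space with $G(f)$ outside it already contradicts \eqref{Condition2}.

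The paper does not reduce to squares at all. It fixes an interval $I=[a,b]$ on which $|G''|\geq C>0$ and constructs $f=\sum_{j}j^{\beta}g_{j}\in B_{p,1}^{s}(\mathbb{R}^{n},|\cdot|^{\alpha})$, where each $g_{j}$ is a smooth bump that equals the affine function $x\mapsto j^{\lambda}(x_{1}-z_{1}^{j})$ on a small cube near a point $z^{j}$; the centres $z^{j}$ march along the $x_{1}$-axis (not towards the origin) so that the supports become disjoint and $f$ is \emph{exactly linear} on a family of cubes $P_{k}$, with values in $I$. Membership $f\in B_{p,1}^{s}$ comes from the dilation estimate of Theorem~\ref{dilation}. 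For the failure of $G(f)$ one works in $B_{p_{1},\infty}^{s_{1}}\hookleftarrow B_{p,\infty}^{s}$ via Theorem~\ref{embeddings5}, and uses the ball-means-of-differences norm of Theorem~\ref{means-diff-cha1}: because $f$ is affine on each $P_{k}$, the second difference $\Delta_{h}^{2}(G\circ f)(x)$ equals a second difference of $G$ itself at a point of $I$, whence $|\Delta_{h}^{2}(G\circ f)(x)|\geq C\,k^{2(\lambda+\beta)}|h_{1}|^{2}$ with no remainder to handle. Summing over $k$ blows up the norm. Thus the piecewise-linear structure of $f$ is the device that replaces your Taylor step, and since $I$ is wherever $G''$ happens not to vanish, no separate localisation at $t_{0}\neq0$ is required.
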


Here $\mathbb{A}_{p,q}^{s}(\mathbb{R}^{n},|\cdot |^{\alpha })$ stands either
for the Besov space $\mathbb{B}_{p,q}^{s}(\mathbb{R}^{n},|\cdot |^{\alpha })$ or
for the Triebel-Lizorkin space $\mathbb{F}_{p,q}^{s}(\mathbb{R}^{n},|\cdot
|^{\alpha })$. We recover the results on classical Besov and
Triebel-Lizorkin spaces by taking $\alpha =0$. Some sufficient conditions on 
$G$ which ensure $\mathrm{\eqref{Condition2}}$ are given in \cite{DrBanach}.

The question arises what happens when $s\geq \frac{n+\alpha }{p}$ holds. In
that case the Dahlberg result does not hold by the following theorem.

\begin{theorem}
\label{Triviality1 copy(1)}Let\ $1< p<\infty ,1\leq q\leq \infty \ $and $%
0\leq \alpha <n(p-1)$. Let $G(t)=t^{2},t\in \mathbb{R}$.\ Suppose that $s> 
\frac{n+\alpha }{p}$ or 
\begin{equation*}
s=\frac{n+\alpha }{p}\quad \text{and}\quad q=1
\end{equation*}
in the case of Besov spaces $B_{p,q}^{s}(\mathbb{R}^{n},|\cdot |^{\alpha })$
. Then 
\begin{equation*}
T_{G}(\mathbb{A}_{p,q}^{s}(\mathbb{R}^{n},|\cdot |^{\alpha }))\subset 
\mathbb{A}_{p,q}^{s}(\mathbb{R}^{n},|\cdot |^{\alpha }).
\end{equation*}
\end{theorem}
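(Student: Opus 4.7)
The plan is to reduce the statement to showing that $\mathbb{A}_{p,q}^{s}(\mathbb{R}^{n},|\cdot|^{\alpha})$ is a multiplication algebra under the given hypotheses; since $T_{G}(f)=f\cdot f$, the inclusion then follows at once. The argument naturally splits into an embedding step into $L^{\infty}$ and a bilinear paraproduct estimate. The underlying observation is that the power weight $|x|^{\alpha}$ is a Muckenhoupt $A_{p}$ weight precisely under the assumption $0\le\alpha<n(p-1)$, which makes the weighted Littlewood-Paley theory and the weighted Fefferman-Stein vector-valued maximal inequality available.

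First, I would establish the Sobolev-type embedding
\[
\mathbb{A}_{p,q}^{s}(\mathbb{R}^{n},|\cdot|^{\alpha})\hookrightarrow L^{\infty}(\mathbb{R}^{n}),
\]
valid for $s>(n+\alpha)/p$, and also for $s=(n+\alpha)/p$, $q=1$ in the Besov scale. The exponent $(n+\alpha)/p$ plays the role of the \emph{weighted dimension}: for each Littlewood-Paley block one has the pointwise bound $\|\Delta_{j}f\|_{\infty}\lesssim 2^{j(n+\alpha)/p}\|\Delta_{j}f\|_{L^{p}(|\cdot|^{\alpha})}$ (for $\alpha\geq 0$ this is the worst of the local and global regimes). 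Summing this estimate and using the Littlewood-Paley characterisation of $\mathbb{A}_{p,q}^{s}(\mathbb{R}^{n},|\cdot|^{\alpha})$ yields the embedding, where the endpoint $q=1$ is needed exactly to sum the critical geometric series.

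Next, I would prove the product estimate
\[
\|fg\|_{\mathbb{A}_{p,q}^{s}(\mathbb{R}^{n},|\cdot|^{\alpha})}\lesssim \|f\|_{\infty}\|g\|_{\mathbb{A}_{p,q}^{s}(\mathbb{R}^{n},|\cdot|^{\alpha})}+\|g\|_{\infty}\|f\|_{\mathbb{A}_{p,q}^{s}(\mathbb{R}^{n},|\cdot|^{\alpha})}
\]
through Bony's paraproduct decomposition $fg=\Pi_{1}(f,g)+\Pi_{2}(f,g)+\Pi_{3}(f,g)$, where $\Pi_{1}(f,g)=\sum_{j}S_{j-2}f\cdot\Delta_{j}g$, $\Pi_{2}(f,g)=\Pi_{1}(g,f)$, and $\Pi_{3}(f,g)=\sum_{j}\Delta_{j}f\sum_{|k-j|\le 1}\Delta_{k}g$. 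The first two paraproducts are frequency-localised (the spectrum of the $j$-th summand lies in an annulus of size $2^{j}$), so the weighted Littlewood-Paley characterisation, combined with $\|S_{j-2}f\|_{\infty}\le \|f\|_{\infty}$ and the weighted vector-valued maximal inequality, gives the required bound. Taking $f=g$ and combining with step one one obtains $\|f^{2}\|_{\mathbb{A}_{p,q}^{s}}\lesssim \|f\|_{\infty}\|f\|_{\mathbb{A}_{p,q}^{s}}\lesssim \|f\|_{\mathbb{A}_{p,q}^{s}}^{2}$.

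The main obstacle is the third paraproduct $\Pi_{3}$, whose frequency support is not well localised around a single dyadic annulus; its contribution has to be shifted to a smaller Besov/Triebel-Lizorkin scale via an additional weighted Jawerth-type embedding (trading summability for regularity), which is sharp precisely at the critical Besov endpoint $s=(n+\alpha)/p$, $q=1$. The proof therefore has to handle this endpoint carefully, exploiting the $\ell^{1}$-summation over dyadic scales to close the estimate without losing regularity.
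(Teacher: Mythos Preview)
Your proposal is correct and follows essentially the same route as the paper: paraproduct decomposition of $f^{2}$ combined with an $L^{\infty}$-type control coming from the embedding of $\mathbb{A}_{p,q}^{s}(\mathbb{R}^{n},|\cdot|^{\alpha})$ into $L^{\infty}$ under the stated hypotheses on $s$ and $q$. The paper works directly with $f^{2}$ rather than a general bilinear product, so its $\Pi_{3}$ coincides with $\Pi_{1}$ by symmetry, and your $\Pi_{3}$ corresponds to the paper's diagonal term $\Pi_{2}(f)=\sum_{j}\overline{\Lambda}_{j}f\,\Lambda_{j}f$.

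The one point where you over-engineer the argument is the diagonal term. You describe it as the ``main obstacle'' requiring a Jawerth-type embedding that is sharp at the critical endpoint. In the paper no such extra embedding is needed: the key lemma (Lemma~\ref{Key-lemma1}) applies to sequences whose Fourier supports lie in \emph{balls} $\{|\xi|\le A\,2^{l+1}\}$, not just annuli, and this is valid for any $s>0$. Hence the paper estimates $\Pi_{2}$ exactly like $\Pi_{1}$: pull out $\sup_{j}|\overline{\Lambda}_{j}f|$ in $L^{\infty}$ and bound what remains by $\|f\|_{\mathbb{A}_{p,q}^{s}}$. The assumption $s>(n+\alpha)/p$ (or $s=(n+\alpha)/p$, $q=1$ for Besov) is used \emph{only} to obtain the $L^{\infty}$ control of the Littlewood--Paley pieces, via the chain $F_{p,\infty}^{s}(\mathbb{R}^{n},|\cdot|^{\alpha})\hookrightarrow B_{p,1}^{(n+\alpha)/p}(\mathbb{R}^{n},|\cdot|^{\alpha})\hookrightarrow B_{t,1}^{n/t}(\mathbb{R}^{n})$; it plays no further role in the diagonal estimate. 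Your plan would still work, but it is more laborious than necessary.
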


We will prove these results in Section 3.

\subsection{Notation}

As usual, we denote by $\mathbb{R}^{n}$ the $n$-dimensional real Euclidean
space, $\mathbb{N}$ the collection of all natural numbers and $\mathbb{N}%
_{0}=\mathbb{N}\cup \{0\}$. For a multi-index $\alpha =\left( \alpha
_{1},...,\alpha _{n}\right) \in \mathbb{N}_{0}^{n}$, we write $\left\vert
\alpha \right\vert =\alpha _{1}+...+\alpha _{n}$. The notation $%
X\hookrightarrow Y$ stands for continuous embeddings from $X$ to $Y$, where $%
X$ and $Y$ are normed spaces. We use the notation $\lfloor x\rfloor $ for
the integer part of the real number $x$. Let $f$ be a measurable function
and $a\in \mathbb{R}^{n}$. We define the translation operator by $\tau
_{a}f=f(\cdot -a)$.

For $x\in \mathbb{R}^{n}$ and $r>0$ we denote by $B(x,r)$ the open ball in $%
\mathbb{R}^{n}$ with center $x$ and radius $r$.

If $\Omega \subset {\mathbb{R}^{n}}$ is a measurable set, then $|\Omega |$
stands for the (Lebesgue) measure of $\Omega $ and $\chi _{\Omega }$ denotes
its characteristic function. The Lebesgue space $L^{p}$, $0<p\leq \infty $
consists of all measurable functions $f$ for which 
\begin{equation*}
\big\|f\big\|_{p}=\Big(\int_{\mathbb{R}^{n}}\left\vert f(x)\right\vert ^{p}dx%
\Big)^{1/p}<\infty ,\text{\quad }0<p<\infty
\end{equation*}%
and 
\begin{equation*}
\big\|f\big\|_{\infty }=\underset{x\in \mathbb{R}^{n}}{\text{ess-sup}}%
\left\vert f(x)\right\vert <\infty .
\end{equation*}

Let $\alpha \in \mathbb{R}$ and $0<p<\infty $. The weighted Lebesgue space $%
L^{p}(\mathbb{R}^{n},|\cdot |^{\alpha })$ contains all measurable functions $%
f$ such that 
\begin{equation*}
\big\|f\big\|_{L^{p}(\mathbb{R}^{n},|\cdot |^{\alpha })}=\Big(\int_{\mathbb{R%
}^{n}}\left\vert f(x)\right\vert ^{p}|x|^{\alpha }dx\Big)^{1/p}<\infty .
\end{equation*}

By $\mathcal{S}(\mathbb{R}^{n})$ we denote the Schwartz space of all
complex-valued, infinitely differentiable and rapidly decreasing functions
on $\mathbb{R}^{n}$ and by $\mathcal{S}^{\prime }(\mathbb{R}^{n})$ the dual
space of all tempered distributions on $\mathbb{R}^{n}$. We define the
Fourier transform of a function $f\in \mathcal{S}(\mathbb{R}^{n})$ by%
\begin{equation*}
\mathcal{F}(f)(\xi )=\left( 2\pi \right) ^{-n/2}\int_{\mathbb{R}
^{n}}e^{-ix\cdot \xi }f(x)dx,\quad \xi \in \mathbb{R}^{n}.
\end{equation*}%
The inverse Fourier transform is denoted by $\mathcal{F}^{-1}f$. Both $%
\mathcal{F}$ and $\mathcal{F}^{-1}$ are extended to the dual Schwartz space $%
\mathcal{S}^{\prime }(\mathbb{R}^{n})$ in the usual way.

By $c$ we denote generic positive constants, which may have different values
at different occurrences. Further notation will be introduced later on when
needed.

\section{Besov and Triebel-Lizorkin spaces}

We present the Fourier analytical definition of Besov and Triebel-Lizorkin
spaces of power weights and recall their basic properties. We first need the
concept of a smooth dyadic resolution of unity. Let $\psi $\ be a function\
in $\mathcal{S}(\mathbb{R}^{n})$\ satisfying%
\begin{equation*}
0\leq \psi \leq 1\text{\quad and}\quad \psi (x)=\left\{ 
\begin{array}{ccc}
1, & \text{if } & \left\vert x\right\vert \leq 1, \\ 
0, & \text{if} & \left\vert x\right\vert \geq \frac{3}{2}.%
\end{array}%
\right. 
\end{equation*}%
We put $\mathcal{F}\varphi _{0}=\psi $, $\mathcal{F}\varphi _{1}=\psi (\frac{%
\cdot }{2})-\psi $\ and $\mathcal{F}\varphi _{j}=\mathcal{F}\varphi
_{1}(2^{1-j}\cdot )\ $for$\ j=2,3$,.... Then $\{\mathcal{F}\varphi
_{j}\}_{j\in \mathbb{N}_{0}}$\ is a smooth dyadic resolution of unity, 
\begin{equation*}
\sum_{j=0}^{\infty }\mathcal{F}\varphi _{j}(x)=1
\end{equation*}%
for all $x\in \mathbb{R}^{n}$.\ Thus we obtain the Littlewood-Paley
decomposition 
\begin{equation*}
f=\sum_{j=0}^{\infty }\varphi _{j}\ast f
\end{equation*}%
of all $f\in \mathcal{S}^{\prime }(\mathbb{R}^{n})$ $($convergence in $%
\mathcal{S}^{\prime }(\mathbb{R}^{n}))$.

We are now in a position to state the definition of Besov and
Triebel-Lizorkin spaces equipped with power weights.

\begin{definition}
\label{def-herz-Besov}Let $\alpha ,s\in \mathbb{R}$, $0<p<\infty $\ and $%
0<q\leq \infty $. $\newline
\mathrm{(i)}$ The Besov space $B_{p,q}^{s}(\mathbb{R}^{n},|\cdot |^{\alpha
}) $ is the collection of all $f\in \mathcal{S}^{\prime }(\mathbb{R}^{n})$
such that 
\begin{equation*}
\big\Vert f\big\Vert_{B_{p,q}^{s}(\mathbb{R}^{n},|\cdot |^{\alpha })}=\Big(%
\sum_{j=0}^{\infty }2^{jsq}\big\Vert\varphi _{j}\ast f\big\Vert_{L^{p}(%
\mathbb{R}^{n},|\cdot |^{\alpha })}^{q}\Big)^{1/q}<\infty ,
\end{equation*}%
with the obvious modification if $q=\infty $.$\newline
\mathrm{(ii)}$ The Triebel-Lizorkin space $F_{p,q}^{s}(\mathbb{R}^{n},|\cdot
|^{\alpha })$ is the collection of all $f\in \mathcal{S}^{\prime }(\mathbb{R}%
^{n})$\ such that 
\begin{equation*}
\big\Vert f\big\Vert_{F_{p,q}^{s}(\mathbb{R}^{n},|\cdot |^{\alpha })}=\Big\|%
\Big(\sum\limits_{j=0}^{\infty }2^{jsq}\left\vert \varphi _{j}\ast
f\right\vert ^{q}\Big)^{1/q}\Big\|_{L^{p}(\mathbb{R}^{n},|\cdot |^{\alpha
})}<\infty ,
\end{equation*}%
with the obvious modification if $q=\infty .$
\end{definition}

\begin{remark}
Let\textrm{\ }$s\in \mathbb{R},0<p<\infty ,0<q\leq \infty $ and $\alpha >-n$%
. The spaces\textrm{\ }$B_{p,q}^{s}(\mathbb{R}^{n},|\cdot |^{\alpha })$ and $%
F_{p,q}^{s}(\mathbb{R}^{n},|\cdot |^{\alpha })$ are independent of the
particular choice of the smooth dyadic resolution of unity \textrm{\ }$\{%
\mathcal{F}\varphi _{j}\}_{j\in \mathbb{N}_{0}}$ (in the sense of$\mathrm{\ }
$equivalent quasi-norms). In particular $B_{p,q}^{s}(\mathbb{R}^{n},|\cdot
|^{\alpha })$ and $F_{p,q}^{s}(\mathbb{R}^{n},|\cdot |^{\alpha })$ are
quasi-Banach spaces and they are Banach spaces if $p,q\geq 1$, see \cite%
{Bui82} and \cite{XuYang03}. In addition 
\begin{equation}
F_{p,2}^{m}(\mathbb{R}^{n},|\cdot |^{\alpha })=W_{p}^{m}(\mathbb{R}%
^{n},|\cdot |^{\alpha }),\quad (\text{Sobolev spaces of power weights})
\label{Sobolev}
\end{equation}%
for any $m\in \mathbb{N}_{0},1<p<\infty $ and any $-n<\alpha <n(p-1)$.
Moreover, for $\alpha =0$ we re-obtain the usual Besov and Triebel-Lizorkin
spaces, see \cite{Sawano18}, \cite{Triebel83} and \cite{Triebel92} for more
details about unweighted function spaces.
\end{remark}

The next theorem implies that the spaces $A_{p,q}^{s}(\mathbb{R}^{n},|\cdot
|^{\alpha })$ exclusively contain regular distributions, at least for $1\leq
p<\infty ,1\leq q\leq \infty ,\alpha \geq 0$ and $s>0$. The proof is given
in \cite{Drihem20}.

\begin{theorem}
\label{embeddings}\textit{Let }$1< p<\infty ,1\leq q\leq \infty ,0\leq
\alpha <n(p-1)$ and $s>0$. Then 
\begin{equation*}
A_{p,q}^{s}(\mathbb{R}^{n},|\cdot |^{\alpha })\subset L_{\mathrm{loc} }^{1}(%
\mathbb{R}^{n}).
\end{equation*}
\end{theorem}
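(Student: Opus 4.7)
My overall strategy is a two-step reduction: first exchange local integrability for weighted $L^p$-membership via H\"older, and then establish the weighted $L^p$-embedding through an elementary Minkowski/Littlewood--Paley argument.

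For the first step, fix a ball $B\subset\mathbb{R}^n$. For any $f\in L^p(\mathbb{R}^n,|\cdot|^\alpha)$, splitting $|f(x)|=|f(x)|\cdot|x|^{\alpha/p}\cdot|x|^{-\alpha/p}$ and applying H\"older's inequality with exponents $p$ and $p/(p-1)$ yields
\begin{equation*}
\int_B|f(x)|\,dx\leq\Bigl(\int_B|f(x)|^p|x|^\alpha\,dx\Bigr)^{1/p}\Bigl(\int_B|x|^{-\alpha/(p-1)}\,dx\Bigr)^{(p-1)/p}.
\end{equation*}
The last factor is finite precisely when $\alpha/(p-1)<n$, which is exactly the upper bound in the hypothesis $0\leq\alpha<n(p-1)$. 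So the theorem reduces to the embedding $A_{p,q}^{s}(\mathbb{R}^n,|\cdot|^\alpha)\hookrightarrow L^{p}(\mathbb{R}^n,|\cdot|^\alpha)$ for $s>0$.

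For this embedding I would argue block by block on the Littlewood--Paley decomposition. In the Besov case, since $p\geq 1$, Minkowski and then H\"older in $j$ give
\begin{equation*}
\|f\|_{L^p(|\cdot|^\alpha)}\leq\sum_{j=0}^\infty\|\varphi_j\ast f\|_{L^p(|\cdot|^\alpha)}\leq\Bigl(\sum_{j=0}^\infty 2^{-jsq'}\Bigr)^{1/q'}\|f\|_{B_{p,q}^{s}(|\cdot|^\alpha)},
\end{equation*}
which is finite because $s>0$. In the Triebel--Lizorkin case I would use the pointwise bound $|\varphi_j\ast f(x)|\leq 2^{-js}\bigl(\sum_{k}2^{ksq}|\varphi_k\ast f(x)|^q\bigr)^{1/q}$ (with the obvious sup-modification when $q=\infty$), take the $L^p(|\cdot|^\alpha)$-norm, and sum in $j$ to reach the bound $C_{s}\|f\|_{F_{p,q}^{s}(|\cdot|^\alpha)}$. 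The same block estimates show that the Littlewood--Paley series converges absolutely in $L^p(|\cdot|^\alpha)$, and by uniqueness of distributional limits its sum coincides with $f$.

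The main obstacle I expect is the identification step: one must check that an element of $L^p(\mathbb{R}^n,|\cdot|^\alpha)$ can genuinely be viewed as a tempered distribution, so that the Schwartz-level identity $f=\sum_j\varphi_j\ast f$ transfers to the equality between $f$ and the $L^p(|\cdot|^\alpha)$-sum obtained above. This comes down to showing that the pairing $\langle g,\phi\rangle=\int g\phi$ is absolutely convergent for every $\phi\in\mathcal{S}(\mathbb{R}^n)$, which once again reduces by H\"older to the local integrability of $|x|^{-\alpha/(p-1)}$ and thus to the same crucial upper bound $\alpha<n(p-1)$.
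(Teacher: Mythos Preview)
Your argument is correct. The paper does not actually prove this theorem in the text; it simply states ``The proof is given in \cite{Drihem20}''. So there is no in-paper proof to compare line by line. That said, the heart of your approach --- the observation that $\sum_j\|\varphi_j\ast f\|_{L^p(|\cdot|^\alpha)}<\infty$ when $s>0$, so the Littlewood--Paley series converges absolutely in $L^p(\mathbb{R}^n,|\cdot|^\alpha)$ to some $g$, followed by the identification $f=g$ via uniqueness of $\mathcal{S}'$-limits --- is exactly the mechanism the paper itself invokes later, in the proof of Lemma~3.3, to justify that $f$ and $Q_jf$ agree in $L^p(|\cdot|^\alpha)$. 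Your additional H\"older step reducing $L^1_{\mathrm{loc}}$ to $L^p(|\cdot|^\alpha)$ via local integrability of $|x|^{-\alpha/(p-1)}$ (this is where the hypothesis $\alpha<n(p-1)$, i.e.\ the $A_p$ range for the weight, enters) is the natural complement and is also used implicitly in the paper's duality estimate in that same lemma. One presentational remark: the inequality $\|f\|_{L^p(|\cdot|^\alpha)}\le\sum_j\|\varphi_j\ast f\|_{L^p(|\cdot|^\alpha)}$ as you first write it tacitly assumes the identification $f=\sum_j\varphi_j\ast f$ in $L^p(|\cdot|^\alpha)$, which is the conclusion; you address this further down, but in the final write-up the logical order should be: prove absolute convergence of the block series in $L^p(|\cdot|^\alpha)$, call the limit $g$, then identify $g=f$ in $\mathcal{S}'$.
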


Let $f$ be an arbitrary function on $\mathbb{R}^{n}$ and $x,h\in \mathbb{R}%
^{n}$. We put 
\begin{equation*}
\Delta _{h}f(x)=f(x+h)-f(x).
\end{equation*}
By ball means of differences we mean the function%
\begin{equation*}
d_{t}^{M}f(x)=t^{-n}\int_{|h|\leq t}\left\vert \Delta
_{h}^{M}f(x)\right\vert dh=\int_{B}\left\vert \Delta
_{th}^{M}f(x)\right\vert dh,
\end{equation*}%
where $B=\{y\in \mathbb{R}^{n}:|h|\leq 1\}$ is the unit ball of $\mathbb{R}%
^{n}$, $t>0$ is a real number and $M$ is a natural number. Let $1\leq
p<\infty ,1\leq q\leq \infty ,\alpha \geq 0$ and $s>0$. We set%
\begin{equation*}
\big\|f\big\|_{B_{p,q}^{s}(\mathbb{R}^{n},|\cdot |^{\alpha })}^{\ast }=\big\|%
f\big\|_{L^{p}(\mathbb{R}^{n},|\cdot |^{\alpha })}+\Big(\int_{0}^{1}t^{-sq}%
\big\|d_{t}^{M}f\big\|_{L^{p}(\mathbb{R}^{n},|\cdot |^{\alpha })}^{q}\frac{dt%
}{t}\Big)^{1/q}
\end{equation*}%
with the obvious modification if $q=\infty .$

To prove Theorem \ref{Triviality1} with $\alpha =0$, the arguments of \cite[%
Corollary 2]{Si97} are based on the characterization of Besov and
Triebel-Lizorkin spaces by differences. One of the main diffculties to prove
Theorem \ref{Triviality1} is that the norm in $A_{p,q}^{s}(\mathbb{R}%
^{n},|\cdot |^{\alpha })$ with $\alpha \neq 0$ is not translation invariant,
so we are forced to introduce a new method. We think that it is better to
use the following characterization of $B_{p,q}^{s}(\mathbb{R}^{n},|\cdot
|^{\alpha })$ by ball means of differences, see \cite{DjDr}.

\begin{theorem}
\label{means-diff-cha1}\textit{Let }$1< p<\infty ,1\leq q\leq \infty
,0<\alpha <n(p-1) $ and $M\in \mathbb{N}.$ Assume that 
\begin{equation*}
0<s<M.
\end{equation*}
Then $\big\|\cdot \big\|_{B_{p,q}^{s}(\mathbb{R}^{n},|\cdot |^{\alpha
})}^{\ast }$ is equivalent norm in $B_{p,q}^{s}(\mathbb{R}^{n},|\cdot
|^{\alpha })$.
\end{theorem}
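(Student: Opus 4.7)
The strategy is to prove the two inequalities
\begin{equation*}
\big\|f\big\|_{B^{s}_{p,q}(\mathbb{R}^n,|\cdot|^\alpha)}^{\ast} \lesssim \big\|f\big\|_{B^{s}_{p,q}(\mathbb{R}^n,|\cdot|^\alpha)} \lesssim \big\|f\big\|_{B^{s}_{p,q}(\mathbb{R}^n,|\cdot|^\alpha)}^{\ast}
\end{equation*}
by adapting Triebel's classical scheme for the unweighted difference characterization to the present setting. The guiding observation is that the hypothesis $0<\alpha<n(p-1)$ places the weight $w(x)=|x|^\alpha$ in the Muckenhoupt class $A_p$. Consequently, the Hardy--Littlewood maximal operator and its Peetre-type variants associated with functions of dyadic spectrum remain bounded on $L^p(\mathbb{R}^n,|\cdot|^\alpha)$, and a weighted Nikol'skij inequality $\|\partial^\beta(\varphi_k\ast f)\|_{L^p(|\cdot|^\alpha)}\lesssim 2^{k|\beta|}\|\varphi_k\ast f\|_{L^p(|\cdot|^\alpha)}$ holds. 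These weighted tools substitute for the translation invariance that is unavailable here.

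For the first inequality I would decompose $f=\sum_k\varphi_k\ast f$ and, for $t$ in the dyadic range $2^{-j}\leq t<2^{-j+1}$, split the sum into the high-frequency part $k\geq j$ and the low-frequency part $k<j$. On the high range the crude bound $|\Delta_h^{M}(\varphi_k\ast f)(x)|\leq 2^{M}\sup_{|y|\leq M|h|}|\varphi_k\ast f(x+y)|$ is majorized pointwise by a Peetre maximal function of $\varphi_k\ast f$, to which the $A_p$-boundedness of the maximal operator applies. On the low range, the $M$-fold Taylor expansion of $\varphi_k\ast f$ combined with the weighted Nikol'skij inequality above produces a gain of $(2^{k}t)^{M}$. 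Since $s<M$, the resulting geometric series sums, and the $\ell^q$-sum in $j$ returns the Fourier-analytic norm.

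For the reverse inequality I would use a Calder\'on-type reproducing formula tailored to finite differences. Since $\mathcal{F}\varphi_j$ vanishes near the origin for $j\geq 1$, one may divide the symbol by a factor modelled on $(e^{i\xi\cdot h}-1)^M$ and extract an identity of the form
\begin{equation*}
\varphi_j\ast f(x)=\int_{B}K_j(x,h)\,\Delta^{M}_{2^{-j}h}f(x)\,dh
\end{equation*}
with rapidly decreasing kernel. A pointwise majorization of the right-hand side by the Hardy--Littlewood maximal function of $d^{M}_{2^{-j}}f$, followed by the weighted maximal inequality and a dyadic discretisation of the $t$-integral, would yield $\|f\|_{B^s_{p,q}(|\cdot|^\alpha)}\lesssim \|f\|^{\ast}$; the $L^p(|\cdot|^\alpha)$-term in $\|f\|^{\ast}$ controls the contribution of $\varphi_0\ast f$.

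The main obstacle is precisely the lack of translation invariance of $\|\cdot\|_{L^p(\mathbb{R}^n,|\cdot|^\alpha)}$: a shift $\tau_h f$ cannot be compared directly to $f$ in that norm, so every pointwise translation produced by $\Delta^M_h$ must be absorbed into a maximal function \emph{before} taking the $L^p(|\cdot|^\alpha)$-norm. The $A_p$-property of $|x|^\alpha$ is what makes this maneuver close the argument in both directions. A secondary technical issue is that $\varphi_0\ast f$ needs to be handled separately in both estimates, which is why the term $\|f\|_{L^p(|\cdot|^\alpha)}$ appears in the definition of $\|\cdot\|^{\ast}$.
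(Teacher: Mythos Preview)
The paper does not actually prove this theorem; it merely states the result and cites \cite{DjDr} (Djeriou--Drihem, \emph{Mediterr.\ J.\ Math.}\ 2019) for the proof. So there is no in-paper argument to compare against.

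That said, your outline is the standard route and is essentially correct. The key point you identify --- that $|x|^{\alpha}\in A_p$ for $0<\alpha<n(p-1)$, so the Hardy--Littlewood maximal operator (and hence Peetre maximal functions of band-limited pieces) is bounded on $L^p(\mathbb{R}^n,|\cdot|^{\alpha})$ --- is exactly the substitute for translation invariance that makes the classical Triebel argument go through. The high/low frequency split with the gain $(2^{k}t)^{M}$ on the low part and maximal control on the high part is standard and works here verbatim once the maximal bound is in hand; the condition $0<s<M$ is what makes both geometric series converge.

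One point to sharpen: in the reverse direction, the reproducing formula you sketch,
\[
\varphi_j\ast f(x)=\int_{B}K_j(x,h)\,\Delta^{M}_{2^{-j}h}f(x)\,dh,
\]
is slightly imprecise as written --- the actual identity involves a convolution kernel acting on $\Delta^{M}_{2^{-j}h}f$, not a pointwise multiplier $K_j(x,h)$, and one typically obtains
\[
|\varphi_j\ast f(x)|\lesssim \mathcal{M}\big(d^{M}_{c\,2^{-j}}f\big)(x)
\]
after integrating in $h$ and using the rapid decay of the kernel. This still leads to the conclusion you state via the weighted maximal inequality, but the intermediate step should be phrased as a convolution (or maximal) estimate rather than a pointwise identity with kernel $K_j(x,h)$. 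With that adjustment the argument closes.
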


To prove our results of this paper we need some embeddings. The following
statement holds by \cite[Theorem 5.9]{Drihem2013a} and \cite{MM12}.

\begin{theorem}
\label{embeddings5}\textit{Let }$\alpha _{1},\alpha _{2},s_{1},s_{2}\in 
\mathbb{R},1\leq \beta \leq \infty ,\alpha _{1}>-n\ $\textit{and }$\alpha
_{2}>-n$. \textit{We suppose that } 
\begin{equation*}
s_{1}-\frac{n+\alpha _{1}}{p}\leq s_{2}-\frac{n+\alpha _{2}}{q}.
\end{equation*}
\textit{Let }$1\leq q\leq p<\infty $ and $\frac{\alpha _{2}}{q}\geq \frac{%
\alpha _{1}}{p}$. Then 
\begin{equation*}
B_{q,\beta }^{s_{2}}(\mathbb{R}^{n},|\cdot |^{\alpha _{2}})\hookrightarrow
B_{p,\beta }^{s_{1}}(\mathbb{R}^{n},|\cdot |^{\alpha _{1}}).
\end{equation*}
\end{theorem}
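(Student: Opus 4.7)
The plan is to reduce to a weighted Plancherel--Polya--Nikol'skii inequality for band-limited functions, then sum over the dyadic Littlewood--Paley blocks in the $\ell^\beta$ direction, exploiting the hypothesis $s_1-(n+\alpha_1)/p\le s_2-(n+\alpha_2)/q$ to absorb the $j$-dependent loss.

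The first step is to establish the following key estimate: for every $j\in\mathbb{N}_0$ and every $f\in\mathcal{S}'(\mathbb{R}^n)$ with $\mathrm{supp}(\mathcal{F}f)\subset B(0,2^{j+1})$,
\begin{equation*}
\|f\|_{L^{p}(\mathbb{R}^{n},|\cdot|^{\alpha_1})}\;\lesssim\;2^{j\sigma}\,\|f\|_{L^{q}(\mathbb{R}^{n},|\cdot|^{\alpha_2})},\qquad \sigma:=\tfrac{n+\alpha_2}{q}-\tfrac{n+\alpha_1}{p},
\end{equation*}
with constants independent of $j$ and $f$. Note that $\sigma\ge 0$ follows directly from $q\le p$ together with $\alpha_2/q\ge\alpha_1/p$. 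To prove this I would decompose $\mathbb{R}^n$ into the dyadic annuli $A_k=\{x:2^{k-1}\le|x|<2^k\}$, $k\in\mathbb{Z}$ (together with the unit ball), replace $|x|^{\alpha_i}$ on $A_k$ by $2^{k\alpha_i}$ up to absolute constants, and apply the Peetre sub-mean estimate
\begin{equation*}
|f(x)|\;\lesssim\;\Big(2^{jn}\int_{\mathbb{R}^n}\frac{|f(y)|^{r}}{(1+2^j|x-y|)^{Nr}}\,dy\Big)^{1/r},
\end{equation*}
valid for any $r>0$ and $N$ sufficiently large, on each annulus. Summing the annular contributions with $q\le p$ allows one to pass from an $\ell^{p}$-sum over $k$ to an $\ell^{q}$-sum; the condition $\alpha_2/q\ge\alpha_1/p$ is precisely what makes the resulting geometric series over $k$ convergent to a constant independent of $j$ and produces exactly the factor $2^{j\sigma}$. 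This is essentially the content of \cite[Theorem 5.9]{Drihem2013a} and \cite{MM12}.

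Once the weighted Nikol'skii inequality is available, the embedding is immediate. Apply it to each $\varphi_j\ast f$, whose Fourier support lies in $B(0,2^{j+1})$, multiply by $2^{js_1}$, and take the $\ell^\beta$ norm in $j\ge 0$. Since $s_1+\sigma\le s_2$ is exactly the standing hypothesis, $2^{j(s_1+\sigma)}\le 2^{js_2}$ for every $j\ge 0$, and hence
\begin{equation*}
\|f\|_{B^{s_1}_{p,\beta}(\mathbb{R}^n,|\cdot|^{\alpha_1})}\;\lesssim\;\Big(\sum_{j=0}^{\infty}2^{js_2\beta}\|\varphi_j\ast f\|_{L^q(\mathbb{R}^n,|\cdot|^{\alpha_2})}^{\beta}\Big)^{1/\beta}=\|f\|_{B^{s_2}_{q,\beta}(\mathbb{R}^n,|\cdot|^{\alpha_2})},
\end{equation*}
with the usual modification when $\beta=\infty$.

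The main obstacle is the weighted Nikol'skii inequality itself. In contrast with the unweighted case, one must keep two different weight parameters coherently aligned across every dyadic shell simultaneously. The balance condition $\alpha_2/q\ge\alpha_1/p$ is precisely what keeps the geometric series over the annuli uniformly summable in $j$ and yields the correct scaling factor $2^{j\sigma}$; without this alignment, either the shells close to the origin or those far from it would dominate and the embedding would fail.
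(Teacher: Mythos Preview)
The paper does not actually prove this statement; it simply records it as a consequence of \cite[Theorem 5.9]{Drihem2013a} and \cite{MM12}, which you yourself cite. Your sketch is a correct outline of the standard argument---reduce to a weighted Nikol'skii-type inequality for band-limited functions, then sum the Littlewood--Paley pieces---and is consistent with how those references proceed, so there is nothing to compare against here beyond noting that your proposal fills in what the paper deliberately omits.
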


\subsection{Technical results}

In this subsection we give several results used throughout this paper. The
Hardy-Littlewood maximal operator $\mathcal{M}$ is defined on locally
integrable functions by 
\begin{equation*}
\mathcal{M}f(x)=\sup_{r>0}\frac{1}{|B(x,r)|}\int_{B(x,r)}|f(y)|dy.
\end{equation*}%
Various important results have been proved in the space $L^{p}(\mathbb{R}%
^{n},|\cdot |^{\alpha })$ under some assumptions on $\alpha $ and $p$. The
condition $-n<\alpha <n(p-1),1<p<\infty $ is crucial in the study of the
boundedness of classical operators in $L^{p}(\mathbb{R}^{n},|\cdot |^{\alpha
})$ spaces, such as the Hardy-Littlewood maximal operator. One of the main
tools of this paper is based on the following results, which follows since $%
|\cdot |^{\alpha }\in \mathcal{A}_{p}(\mathbb{R}^{n})$, the Muckenhoupt
class, if and only if $-n<\alpha <n(p-1)$.

\begin{lemma}
\label{Fefferman-Stein}Let $1<p<\infty $ and $-n<\alpha <n(p-1)$. Then 
\begin{equation*}
\big\|\mathcal{M}f\big\|_{L^{p}(\mathbb{R}^{n},|\cdot |^{\alpha })}\lesssim %
\big\|f\big\|_{L^{p}(\mathbb{R}^{n},|\cdot |^{\alpha })}
\end{equation*}
holds\ for any $f\in L^{p}(\mathbb{R}^{n},|\cdot |^{\alpha })$.
\end{lemma}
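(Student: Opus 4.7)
The stated lemma asserts the boundedness of the Hardy--Littlewood maximal operator on $L^{p}(\mathbb{R}^{n},|\cdot|^{\alpha})$, and the excerpt already signals the strategy: it follows from Muckenhoupt's theorem once one verifies that $|\cdot|^{\alpha}\in\mathcal{A}_{p}(\mathbb{R}^{n})$ precisely when $-n<\alpha<n(p-1)$. So my plan is to reduce the statement to the classical Muckenhoupt theorem and then to check the $A_{p}$ condition for power weights directly.

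First, I would recall that a nonnegative locally integrable weight $w$ belongs to $\mathcal{A}_{p}$, $1<p<\infty$, when
\begin{equation*}
[w]_{A_{p}}:=\sup_{B}\Big(\frac{1}{|B|}\int_{B}w(x)\,dx\Big)\Big(\frac{1}{|B|}\int_{B}w(x)^{-1/(p-1)}\,dx\Big)^{p-1}<\infty,
\end{equation*}
where the supremum runs over all Euclidean balls $B\subset\mathbb{R}^{n}$. Muckenhoupt's theorem then gives $\|\mathcal{M}f\|_{L^{p}(w)}\lesssim \|f\|_{L^{p}(w)}$ for every $f\in L^{p}(w)$, with constant depending only on $n$, $p$ and $[w]_{A_{p}}$. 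This part is purely a citation.

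The work is therefore to verify $[w]_{A_{p}}<\infty$ for $w(x)=|x|^{\alpha}$ under the assumption $-n<\alpha<n(p-1)$. Both $w$ and $w^{-1/(p-1)}=|x|^{-\alpha/(p-1)}$ are locally integrable exactly under these constraints (needing $\alpha>-n$ and $-\alpha/(p-1)>-n$, respectively), so the two averages are finite for every fixed ball. To obtain a uniform bound, I would fix an arbitrary ball $B=B(x_{0},r)$ and split into two regimes. If $|x_{0}|\leq 2r$ then $B\subset B(0,3r)$, and a change of variables reduces each average to its value on a ball centered at the origin, giving constants of the form $c(n,\alpha)\,r^{\alpha}$ and $c(n,\alpha,p)\,r^{-\alpha/(p-1)}$; the product is independent of $r$. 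If instead $|x_{0}|>2r$ then $|x|\sim|x_{0}|$ uniformly on $B$, so both averages behave like $|x_{0}|^{\alpha}$ and $|x_{0}|^{-\alpha/(p-1)}$ respectively, and again the product is dimensionless. Taking the supremum over $B$ yields a finite $[w]_{A_{p}}$.

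Combining these two steps gives the lemma. The main (though routine) obstacle is the case analysis verifying the $A_{p}$ condition uniformly in the center and radius of $B$; after that, the conclusion is immediate from Muckenhoupt's theorem. One could alternatively quote the classical fact (e.g.\ from Grafakos' textbook or Stein's \emph{Harmonic Analysis}) that $|x|^{\alpha}\in \mathcal{A}_{p}(\mathbb{R}^{n})$ iff $-n<\alpha<n(p-1)$, which would make the argument essentially a one-line reference.
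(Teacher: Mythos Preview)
Your proposal is correct and follows exactly the approach indicated in the paper: the paper does not give a self-contained proof but simply remarks that $|\cdot|^{\alpha}\in\mathcal{A}_{p}(\mathbb{R}^{n})$ if and only if $-n<\alpha<n(p-1)$ and then cites Stein's \emph{Harmonic Analysis} for the boundedness of $\mathcal{M}$ on $L^{p}(w)$ for $w\in\mathcal{A}_{p}$. Your sketch merely fills in the standard verification of the $A_{p}$ condition for power weights that the paper leaves as a reference.
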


For the proof, see e.g. \cite[p 218, 6.4]{St93}. We shall require below the
following lemma which is a simple conclusion of\ \cite{AJ81} and \cite{Ko}.

\begin{lemma}
\label{Fefferman-Stein copy(2)}Let $1<q<\infty $ and $1<p<\infty $. If $%
\{f_{j}\}_{j=0}^{\infty }$ is a sequence of locally integrable functions on $%
\mathbb{R}^{n}$ and $-n<\alpha <n(p-1)$, then
\end{lemma}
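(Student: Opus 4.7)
The plan is to prove the vector-valued Fefferman--Stein inequality
\[
\Big\| \Big(\sum_{j=0}^{\infty}|\mathcal{M}f_{j}|^{q}\Big)^{1/q}\Big\|_{L^{p}(\mathbb{R}^{n},|\cdot|^{\alpha})}\lesssim \Big\| \Big(\sum_{j=0}^{\infty}|f_{j}|^{q}\Big)^{1/q}\Big\|_{L^{p}(\mathbb{R}^{n},|\cdot|^{\alpha})}
\]
(which is the natural continuation of the statement) by reducing it to the general weighted maximal theory, in the spirit of \cite{AJ81} and \cite{Ko}. The crucial observation is that under the hypothesis $-n<\alpha<n(p-1)$ the power weight $|x|^{\alpha}$ belongs to the Muckenhoupt class $\mathcal{A}_{p}(\mathbb{R}^{n})$; this is the same condition that already appears in Lemma \ref{Fefferman-Stein}, so both the scalar and the $\ell^{q}$-valued theories apply in the same range.

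The first step is to verify $|x|^{\alpha}\in \mathcal{A}_{p}$ explicitly. One checks the $\mathcal{A}_{p}$ ratio on balls separately for balls staying away from the origin, where $|x|^{\alpha}$ is comparable to a constant, and for balls containing the origin, where the integrals are computed in polar coordinates. The conclusion is that the $\mathcal{A}_{p}$ constant is finite precisely when both $|x|^{\alpha}$ and $|x|^{-\alpha/(p-1)}$ are locally integrable, i.e.\ when $-n<\alpha<n(p-1)$. The second step is to invoke Rubio de Francia's extrapolation theorem, or equivalently the direct vector-valued statements of \cite{AJ81,Ko}: the scalar bound of Lemma \ref{Fefferman-Stein}, valid for every weight in $\mathcal{A}_{p}$ and every $1<p<\infty$, lifts automatically to the $\ell^{q}$-valued inequality above for every $1<p,q<\infty$. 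The openness property $\mathcal{A}_{p}=\bigcup_{r<p}\mathcal{A}_{r}$ supplies the slack that extrapolation requires, and alternative elementary approaches are also available (the case $q=p$ is immediate by interchanging sum and norm, while the remaining cases reduce by duality against $L^{p'}(\mathbb{R}^{n},|\cdot|^{-\alpha/(p-1)})$, whose weight lies in the dual Muckenhoupt class).

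The main obstacle is purely bookkeeping: one must keep track of which exponent drives the extrapolation step and verify that the power weight remains in the correct Muckenhoupt class throughout. Since $1<p,q<\infty$ strictly, there are no endpoint complications, and no additional structure beyond the doubling of Lebesgue measure is needed. Consequently the proof reduces to combining the $\mathcal{A}_{p}$ characterization of the power weight $|x|^{\alpha}$ with the standard weighted vector-valued maximal theorem, exactly as the references cited immediately before the statement suggest.
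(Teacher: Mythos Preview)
Your proposal is correct and matches the paper's approach: the paper does not actually give a proof of this lemma, it merely states that the result is ``a simple conclusion of \cite{AJ81} and \cite{Ko}'', and your sketch fills in precisely the two ingredients that justification requires---the fact that $|x|^{\alpha}\in\mathcal{A}_{p}(\mathbb{R}^{n})$ for $-n<\alpha<n(p-1)$ (which the paper states explicitly just before Lemma~\ref{Fefferman-Stein}) together with the weighted vector-valued maximal theorem from those references.
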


\begin{equation*}
\Big\|\Big(\sum_{j=0}^{\infty }(\mathcal{M}f_{j})^{q}\Big)^{1/q}%
\Big\|_{L^{p}(\mathbb{R}^{n},|\cdot |^{\alpha })}\lesssim \Big\|\Big(%
\sum_{j=0}^{\infty }|f_{j}|^{q}\Big)^{1/q}\Big\|_{L^{p}(\mathbb{R}
^{n},|\cdot |^{\alpha })}.
\end{equation*}
We need the following Marschall's inequalities, see {\cite[Proposition 1.3]%
{Mar91Forum}. But here, we use the simplified version given in \cite[
Proposition 6.1]{SiYY}.}

\begin{lemma}
\label{Marschall-ineq}Let\textit{\ }$A>0,R\geq 1$. Let $b\in \mathcal{D}( 
\mathbb{R}
^{n})$ and a function $f\in C^{\infty }(\mathbb{\ 
\mathbb{R}
}^{n})$ such that 
\begin{equation*}
\mathrm{supp}\mathcal{F}f\subseteq \left\{ \xi \in \mathbb{R}^{n}:\left\vert
\xi \right\vert \leq AR\right\} \quad \text{and}\quad \mathrm{supp}
b\subseteq \left\{ \xi \in \mathbb{R}^{n}:\left\vert \xi \right\vert \leq
A\right\} .
\end{equation*}
Then 
\begin{equation*}
\left\vert \mathcal{F}^{-1}b\ast f(x)\right\vert \leq c(AR)^{\frac{n}{t}-n} %
\big\|b\big\|_{\dot{B}_{1,t}^{\frac{n}{t}}}\mathcal{M}_{t}(f)(x)
\end{equation*}
for any $0<t\leq 1$ and any $x\in \mathbb{R}^{n}$, where $c$ is independent
of $A$, $R$, $b$ and $f$, and 
\begin{equation*}
\mathcal{M}_{t}(f)=\big(\mathcal{M}(|f|^{t})\big)^{1/t}.
\end{equation*}
\end{lemma}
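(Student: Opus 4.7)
The plan is to combine Peetre's maximal function inequality with a Littlewood--Paley decomposition of the multiplier $b$, and then aggregate the dyadic pieces via $\ell^t$-subadditivity. Since $\mathrm{supp}\,\mathcal{F}f\subseteq B(0,AR)$, Peetre's inequality yields $|f(x-y)|\lesssim (1+AR|y|)^{n/t}\mathcal{M}_t f(x)$; substituting this into $\mathcal{F}^{-1}b\ast f(x)=\int \mathcal{F}^{-1}b(y)f(x-y)\,dy$ reduces matters to the weighted kernel bound $\int_{\mathbb{R}^n}|\mathcal{F}^{-1}b(y)|(1+AR|y|)^{n/t}\,dy\lesssim (AR)^{n/t-n}\|b\|_{\dot{B}^{n/t}_{1,t}}$. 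I would then rescale by setting $\tilde b(\xi)=b(A\xi)$, so that $\mathrm{supp}\,\tilde b\subseteq B(0,1)$; using $\mathcal{F}^{-1}b(y)=A^n\mathcal{F}^{-1}\tilde b(Ay)$ together with the homogeneous Besov scaling $\|b\|_{\dot{B}^{n/t}_{1,t}}=A^{n-n/t}\|\tilde b\|_{\dot{B}^{n/t}_{1,t}}$, this further reduces the claim to
\[
\int_{\mathbb{R}^n}|\mathcal{F}^{-1}\tilde b(z)|(1+R|z|)^{n/t}\,dz\lesssim R^{n/t-n}\|\tilde b\|_{\dot{B}^{n/t}_{1,t}}.
\]

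Next, I would decompose $\tilde b=\sum_{k\in\mathbb{Z}}\tilde b_k$ via the homogeneous Littlewood--Paley partition, so that $\mathcal{F}\tilde b_k$ is supported in the annulus $\{|\eta|\sim 2^k\}$. The crucial structural observation is that $\mathcal{F}^{-1}\tilde b_k(z)=\mathcal{F}\tilde b_k(-z)$, which forces each $\mathcal{F}^{-1}\tilde b_k$ to have \emph{physical-space} support in the dyadic annulus $\{|z|\sim 2^k\}$. Applying Cauchy--Schwarz on this compact support, Plancherel's identity, and Bernstein's inequality (available because $\mathcal{F}\tilde b_k$ lives in an annulus of scale $2^k$) gives $\|\mathcal{F}^{-1}\tilde b_k\|_{L^1}\lesssim 2^{kn/2}\|\tilde b_k\|_{L^2}\lesssim 2^{kn}\|\tilde b_k\|_{L^1}$. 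The annular disjointness then yields the dyadic decomposition
\[
\int|\mathcal{F}^{-1}\tilde b(z)|(1+R|z|)^{n/t}\,dz\lesssim \sum_{k\in\mathbb{Z}}(1+R\cdot 2^k)^{n/t}\|\mathcal{F}^{-1}\tilde b_k\|_{L^1}.
\]

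The main obstacle is the final aggregation: extracting precisely the factor $R^{n/t-n}$. For $0<t\leq 1$, I would apply the subadditivity $(\sum_k a_k)^t\leq \sum_k a_k^t$ to pass to the $\ell^t$-side and then compare the result against the Besov identity $\|\tilde b\|^t_{\dot{B}^{n/t}_{1,t}}=\sum_k 2^{kn}\|\tilde b_k\|^t_{L^1}$. The key is to split the dyadic sum at the threshold $k_0$ with $2^{k_0}\sim 1/R$: for $k\leq k_0$ the weight $(1+R\cdot 2^k)^{n/t}$ is bounded by an absolute constant, while for $k>k_0$ it behaves like $(R\cdot 2^k)^{n/t}$, and this growth must be carefully balanced against the $2^{kn}$ factor coming from the per-piece bound and against the Besov weights, so that after taking the $t$-th root one recovers exactly $R^{n/t-n}\|\tilde b\|_{\dot{B}^{n/t}_{1,t}}$. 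Achieving this dyadic balance (rather than a looser $R^{n/t}$ bound that a naive Peetre-only approach would produce) is the technical heart of the argument.
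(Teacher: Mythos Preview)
The paper does not give its own proof of this lemma; it simply quotes the result from Marschall \cite[Proposition~1.3]{Mar91Forum} in the simplified form of \cite[Proposition~6.1]{SiYY}. So there is no in-paper argument to compare against, and your task was effectively to reconstruct a proof from scratch.

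Your reduction via Peetre's inequality and the rescaling to $\tilde b$ with $\mathrm{supp}\,\tilde b\subseteq B(0,1)$ is correct and standard. The genuine problem is the chain
\[
\|\mathcal{F}^{-1}\tilde b_k\|_{L^1}\;\lesssim\;2^{kn/2}\|\tilde b_k\|_{L^2}\;\lesssim\;2^{kn}\|\tilde b_k\|_{L^1},
\]
which, combined with the weight $(1+R\,2^{k})^{n/t}$ on the annulus $\{|z|\sim 2^{k}\}$, leaves you needing
\[
\sum_{k\in\mathbb{Z}} (1+R\,2^{k})^{n/t}\,2^{kn}\,a_k
\;\lesssim\;
R^{n/t-n}\Big(\sum_{k\in\mathbb{Z}} 2^{kn}a_k^{\,t}\Big)^{1/t},
\qquad a_k:=\|\tilde b_k\|_{L^1}.
\]
This inequality is \emph{false} in general: testing with $a_k=\delta_{k,j}$ for a fixed $j>0$ gives left side $\sim R^{n/t}2^{\,j(n+n/t)}$ and right side $=R^{n/t-n}2^{\,jn/t}$, so one would need $R^{n}2^{\,jn}\lesssim 1$, which fails for every $j\ge 1$. (A similar failure occurs as $j\to -\infty$ when $t<1$.) The culprit is the Bernstein step $\|\tilde b_k\|_{L^2}\lesssim 2^{kn/2}\|\tilde b_k\|_{L^1}$: it exactly undoes the gain from the annular Cauchy--Schwarz, so the decomposition has no net leverage, and the final $\ell^t$-aggregation cannot be closed with the exponent $R^{n/t-n}$ you need.

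The actual proofs in \cite{Mar91Forum} and \cite{SiYY} exploit the compact support of $b$ (equivalently, of $\tilde b$) in a way your argument never does; that hypothesis is what kills the bad high-frequency and low-frequency tails above. If you want to salvage the Littlewood--Paley route, you must feed the support condition $\mathrm{supp}\,\tilde b\subseteq B(0,1)$ into the per-piece estimate (so that the effective physical support of $\tilde b_k$ is uniformly bounded for $k\ge 0$, giving $\|\tilde b_k\|_{L^2}\lesssim\|\tilde b_k\|_{L^1}$ there rather than the lossy Bernstein bound), and handle the negative $k$ separately. Alternatively, follow the cited references directly.
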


Here $\dot{B}_{1,t}^{\frac{n}{t}}$ denotes the homogeneous Besov spaces.

The purpose of the following lemma is to generalize the dilation properties
obtained on Besov and Triebel-Lizorkin spaces in \cite[ Proposition 3.4.1]%
{Triebel83} to the spaces $A_{p,q}^{s}(\mathbb{R}^{n},|x|^{\alpha })$, which
will play an important role in the rest of the paper.

\begin{theorem}
\label{dilation}Let $1<p<\infty ,1<q\leq \infty ,-n<\alpha <n(p-1)$ and\ $%
s>0 $. Then there exists a positive constant $c$ independent of $\lambda $
such that 
\begin{equation*}
\big\|f(\lambda \cdot )\big\|_{A_{p,q}^{s}(\mathbb{R}^{n},|x|^{\alpha
})}\leq c\text{ }\lambda ^{s-\frac{n+\alpha }{p}}\big\|f\big\|_{A_{p,q}^{s}( 
\mathbb{R}^{n},|x|^{\alpha })}
\end{equation*}
holds for all $\lambda $ with $1\leq \lambda <\infty $ and all $f\in
A_{p,q}^{s}(\mathbb{R}^{n},|x|^{\alpha })$.
\end{theorem}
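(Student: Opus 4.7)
The plan is to imitate the classical unweighted dilation argument of Triebel \cite[Proposition 3.4.1]{Triebel83}, replacing every use of translation-invariant $L^{p}$ machinery by the weighted Hardy--Littlewood maximal function, which by Lemma \ref{Fefferman-Stein} is bounded on $L^{p}(\mathbb{R}^{n},|\cdot|^{\alpha})$ precisely in the range $-n<\alpha<n(p-1)$. Fix $\lambda\ge 1$ and pick $N\in\mathbb{N}_{0}$ with $2^{N}\le\lambda<2^{N+1}$, and set $f_{\lambda}(x):=f(\lambda x)$. Decomposing $f=\sum_{k\ge 0}\varphi_{k}\ast f$ in $\mathcal{S}'(\mathbb{R}^{n})$ gives $\varphi_{j}\ast f_{\lambda}=\sum_{k\ge 0}\varphi_{j}\ast[(\varphi_{k}\ast f)(\lambda\cdot)]$, and since $\mathcal{F}[(\varphi_{k}\ast f)(\lambda\cdot)](\xi)=\lambda^{-n}(\mathcal{F}\varphi_{k})(\xi/\lambda)\,\mathcal{F}f(\xi/\lambda)$, an elementary Fourier-support analysis shows that the $k$-th summand vanishes unless $k$ lies in a bounded window around $j-N$ (the cases in which $j$ or $k$ is small being treated separately as a boundary contribution).

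Next I apply Marschall's inequality (Lemma \ref{Marschall-ineq}) with exponent $t=1$, $A\sim\max(2^{j},\lambda 2^{k})$, and $R\sim 1$. The dilation identity $\mathcal{F}\varphi_{j}(\cdot)=\mathcal{F}\varphi_{1}(2^{1-j}\cdot)$ combined with the homogeneity of $\|\cdot\|_{\dot{B}_{1,1}^{n}}$ makes the resulting prefactor bounded uniformly in $j$, yielding
\begin{equation*}
\bigl|\varphi_{j}\ast[(\varphi_{k}\ast f)(\lambda\cdot)](x)\bigr|\;\le\;c\,\mathcal{M}\bigl[(\varphi_{k}\ast f)(\lambda\cdot)\bigr](x)\;=\;c\,\mathcal{M}(\varphi_{k}\ast f)(\lambda x).
\end{equation*}
The last identity $\mathcal{M}(g(\lambda\cdot))(x)=(\mathcal{M}g)(\lambda x)$ is the crucial scaling observation that allows the non-translation-invariant weight $|x|^{\alpha}$ to be handled cleanly through the change of variable $y=\lambda x$, producing a factor $\lambda^{-(n+\alpha)/p}$. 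Since in the effective range $2^{js}\sim\lambda^{s}2^{ks}$, multiplying by $2^{js}$ and summing in $k$ yields the pointwise bound
\begin{equation*}
2^{js}\bigl|\varphi_{j}\ast f_{\lambda}(x)\bigr|\;\lesssim\;\lambda^{s}\sum_{|k-(j-N)|\le c_{0}}2^{ks}\,\mathcal{M}(\varphi_{k}\ast f)(\lambda x).
\end{equation*}

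To close the argument, in the Besov case I take the $L^{p}(\mathbb{R}^{n},|\cdot|^{\alpha})$-norm of both sides, change variables to extract the factor $\lambda^{-(n+\alpha)/p}$, apply Lemma \ref{Fefferman-Stein} to replace $\mathcal{M}(\varphi_{k}\ast f)$ by $\varphi_{k}\ast f$, and finally take the $\ell^{q}$-sum in $j$ by exchanging it with the bounded inner sum in $k$. In the Triebel--Lizorkin case I take the $\ell^{q}(j)$-sum pointwise first, change variables, and then apply the weighted vector-valued Fefferman--Stein inequality (Lemma \ref{Fefferman-Stein copy(2)}); this step needs $q>1$, which is precisely the hypothesis of the theorem. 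The main technical obstacle is the bookkeeping of the low-frequency boundary terms, where $k=0$ contributes to roughly $N$ values of $j$: here the hypothesis $s>0$ is essential, for it forces the geometric sum $\sum_{j=0}^{N}2^{jsq}\sim\lambda^{sq}$, which combines with $\lambda^{-(n+\alpha)/p}$ to reproduce exactly the sought factor $\lambda^{s-(n+\alpha)/p}$ and not a worse one in~$\lambda$.
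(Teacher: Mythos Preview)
Your proposal is correct and follows essentially the same route as the paper: both fix $N$ with $2^{N}\le\lambda<2^{N+1}$, use Marschall's inequality (Lemma~\ref{Marschall-ineq}) to pass to maximal functions of the Littlewood--Paley pieces of $f$, invoke the weighted scalar and vector-valued Fefferman--Stein inequalities (Lemmas~\ref{Fefferman-Stein} and~\ref{Fefferman-Stein copy(2)}) to return to $f$, and exploit $s>0$ to control the low-frequency boundary terms by a geometric sum producing exactly the factor $\lambda^{s}$. The only difference is organizational---the paper first rewrites $\varphi_{v}\ast f_{\lambda}(x)=\phi_{v,\lambda}\ast f(\lambda x)$ for a dilated multiplier $\phi_{v,\lambda}$ and then decomposes, whereas you decompose $f$ first and use the scaling identity $\mathcal{M}(g(\lambda\cdot))(x)=(\mathcal{M}g)(\lambda x)$---but the two computations are interchangeable.
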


\begin{proof}
As the proof for the space $ B_{p,q}^{s}(\mathbb{R}^{n},|x|^{\alpha })$ is similar, we consider only $ F_{p,q}^{s}(\mathbb{R}^{n},|x|^{\alpha })$. Of course, $f(\lambda \cdot )$ must be interpreted in the sense of distributions. By Theorem \ref{embeddings}, it
follows that $f(x)$ is a regular distribution and $f(\lambda x)$ makes also
sense as a locally integrable function. Our proofs use partially some
decomposition techniques already used in \cite[ Proposition 3.4.1]{Triebel83}%
. Let $k\in \mathbb{N}_{0}$ be such that $2^{k}\leq \lambda <2^{k+1}$. Let $%
\{\mathcal{F}\varphi _{v}\}_{v\in \mathbb{N}_{0}}$\ be a smooth dyadic
resolution of unity. We have%
\begin{align*}
	\mathcal{F}^{-1}(\mathcal{F}\varphi _{v}\mathcal{F}(f(\lambda \cdot )))(x)
	&=\lambda ^{-n}\mathcal{F}^{-1}(\mathcal{F}\varphi _{v}\mathcal{F}%
	(f)(\lambda ^{-1}\cdot ))(x) \\
	&=\lambda ^{-n}\mathcal{F}^{-1}(\mathcal{F}\varphi _{1}(2^{1-v}\cdot )%
	\mathcal{F}(f)(\lambda ^{-1}\cdot ))(x) \\
	&=\mathcal{F}^{-1}(\mathcal{F}\varphi _{1}(\lambda 2^{1-v}\cdot )\mathcal{F%
	}(f))(\lambda x),\quad x\in \mathbb{R}^{n},
\end{align*}%
if $v\in \mathbb{N}$. Similarly if $v=0$. We will prove that%
\begin{align}
&\Big\|\Big(\sum\limits_{v=0}^{\infty }2^{vsq}\left\vert \mathcal{F}^{-1}(%
\mathcal{F}\phi _{v,\lambda }\mathcal{F}(f))(\lambda \cdot)\right\vert ^{q}\Big)%
^{1/q}\Big\|_{L^{p}(\mathbb{R}^{n},|x|^{\alpha })}  \label{Est-dila} \\
&\leq c\lambda ^{s-\frac{n+\alpha }{p}}\big\|f\big\|_{F_{p,q}^{s}(\mathbb{R}%
	^{n},|x|^{\alpha })},  \notag
\end{align}%
where the positive constant $c$ is independent of $\lambda $ and%
\begin{equation*}
\mathcal{F}\phi _{v,\lambda }=\left\{ 
\begin{array}{ccc}
\mathcal{F}\varphi _{0}(\lambda \cdot ), & \text{if} & v=0 \\ 
\mathcal{F}\varphi _{1}(\lambda 2^{1-v}\cdot ), & \text{if} & v\in \mathbb{N%
}.%
\end{array}%
\right. 
\end{equation*}%
We divide the sum in {$\mathrm{\eqref{Est-dila}}$ into two parts; the first, 
}$\sum_{v=0}^{k+2}$ and the second $\sum_{v=k+3}^{\infty }$. We have%
\begin{align*}
	\mathcal{F}\varphi _{1}(\lambda 2^{1-v}\cdot ) &=\sum_{j=v-2}^{v+1}%
	\mathcal{F}\varphi _{1}(\lambda 2^{1-v}\cdot )\mathcal{F}\varphi
	_{1}(2^{k-j+1}\cdot ) \\
	&=\sum_{i=-2}^{1}\mathcal{F}\phi _{v,\lambda }\mathcal{F}\varphi
	_{1}(2^{k-v-i+1}\cdot ), \quad v\geq k+3.
\end{align*}%

Thus,%
\begin{equation*}
\mathcal{F}^{-1}(\mathcal{F}\phi _{v,\lambda }\mathcal{F}(f))=%
\sum_{i=-2}^{1}\phi _{v,\lambda }\ast \varphi _{v-k+i}\ast f,
\quad v\geq k+3.
\end{equation*}%
By Lemma \ref{Marschall-ineq} we obtain%
\begin{equation*}
|\mathcal{F}^{-1}(\mathcal{F}\phi _{v,\lambda }\mathcal{F}(f))|\leq
\sum_{i=-2}^{1}\mathcal{M}(\varphi _{v-k+i}\ast f),
\quad v\geq k+3.
\end{equation*}%
Using Lemma \ref{Fefferman-Stein copy(2)}, we find
\begin{align*}
	&\Big\|\Big(\sum_{v=k+3}^{\infty }2^{vsq}\left\vert \mathcal{F}^{-1}(%
	\mathcal{F}\phi _{v,\lambda }\mathcal{F}(f))(\lambda \cdot)\right\vert ^{q}\Big)%
	^{1/q}\Big\|_{L^{p}(\mathbb{R}^{n},|x|^{\alpha })} \\
	&\leq \lambda ^{-\frac{n+\alpha }{p}}\sum_{i=-2}^{1}\Big\|\Big(%
	\sum_{v=k+3}^{\infty }2^{vsq}\left\vert \mathcal{M}(\varphi _{v-k+i}\ast
	f)\right\vert ^{q}\Big)^{1/q}\Big\|_{L^{p}(\mathbb{R}^{n},|x|^{\alpha })} \\
	&\lesssim \lambda ^{s-\frac{n+\alpha }{p}}\big\|f\big\|_{F_{p,q}^{s}(%
		\mathbb{R}^{n},|x|^{\alpha })},
\end{align*}%
where the implicit constant is independent of $\lambda $. Let $v\in
\{1,...,k-2\}$. Then

\begin{align*}
	\mathcal{F}^{-1}(\mathcal{F}\varphi _{v}\mathcal{F}(f(\lambda \cdot )))(x)
	&=\mathcal{F}^{-1}(\mathcal{F}\varphi _{1}(\lambda 2^{1-v}\cdot )\mathcal{F%
	}(f))(\lambda x) \\
	&=\mathcal{F}^{-1}(\mathcal{F}\varphi _{1}(\lambda 2^{1-v}\cdot )\mathcal{F%
	}\varphi _{0}\mathcal{F}(f))(\lambda x) \\
	&=\phi _{v,\lambda }\ast \varphi _{0}\ast f(\lambda x),\quad x\in \mathbb{R}%
	^{n}.
\end{align*}%
Hence, by Lemmas \ref{Fefferman-Stein} and \ref{Marschall-ineq} we obtain%
\begin{align*}
	\big\|\mathcal{F}^{-1}(\mathcal{F}\varphi _{v}\mathcal{F}(f(\lambda \cdot )))%
	\big\|_{L^{p}(\mathbb{R}^{n},|\cdot |^{\alpha })} &=\big\|\phi _{v,\lambda
	}\ast \varphi _{0}\ast f(\lambda \cdot )\big\|_{L^{p}(\mathbb{R}^{n},|\cdot
		|^{\alpha })} \\
	&= \lambda ^{-\frac{n+\alpha }{p}}\big\|\phi _{v,\lambda }\ast \varphi
	_{0}\ast f\big\|_{L^{p}(\mathbb{R}^{n},|\cdot |^{\alpha })} \\
	&\lesssim \lambda ^{-\frac{n+\alpha }{p}}\big\|\mathcal{M}(\varphi _{0}\ast
	f)\big\|_{L^{p}(\mathbb{R}^{n},|\cdot |^{\alpha })} \\
	&\lesssim \lambda ^{-\frac{n+\alpha }{p}}\big\|\varphi _{0}\ast f\big\|%
	_{L^{p}(\mathbb{R}^{n},|\cdot |^{\alpha })},
\end{align*}%
where the implicit constant is independent of $\lambda $. Now, let $v\in
\{0,k-1,k,k+1,k+2\}$. We have%
\begin{align*}
	\mathcal{F}^{-1}(\mathcal{F}\varphi _{0}\mathcal{F}(f(\lambda \cdot )))(x)
	&=\mathcal{F}^{-1}(\mathcal{F}\varphi _{0}(\lambda \cdot )\mathcal{F}%
	(f))(\lambda x) \\
	&=\mathcal{F}^{-1}(\mathcal{F}\varphi _{0}(\lambda \cdot )\mathcal{F}%
	\varphi _{0}\mathcal{F}(f))(\lambda x) \\
	&=\omega _{\lambda }\ast \varphi _{0}\ast f(\lambda x),\quad x\in \mathbb{R}%
	^{n},
\end{align*}%
where\ $\omega _{\lambda }=\lambda ^{-n}\psi (\lambda ^{-1}\cdot )$. We also
obtain%
\begin{align*}
	\mathcal{F}^{-1}(\mathcal{F}\varphi _{k-1}\mathcal{F}(f(\lambda \cdot )))(x)
	&=\mathcal{F}^{-1}(\mathcal{F}\varphi _{1}(\lambda 2^{2-k}\cdot )\mathcal{F}%
	(f))(\lambda x) \\
	&=\tilde{\varphi}_{\lambda ,k}\ast \varphi _{0}\ast f(\lambda x),\quad x\in 
	\mathbb{R}^{n},
\end{align*}%
where $\tilde{\varphi}_{\lambda ,k}=\lambda ^{-n}2^{kn-2n}\varphi
_{1}(\lambda ^{-1}2^{k-2}\cdot )$. If $l\in \{0,1,2\}$, then we obtain%
\begin{equation*}
\mathcal{F}^{-1}(\mathcal{F}\varphi _{k+l}\mathcal{F}(f(\lambda \cdot
)))(x)=\sum_{i=0}^{l+1}\tilde{\varphi}_{\lambda ,k+l+1}\ast \varphi _{i}\ast
f(\lambda x).
\end{equation*}%
Therefore,%
\begin{equation*}
\big\|\mathcal{F}^{-1}(\mathcal{F}\varphi _{v}\mathcal{F}(f(\lambda\cdot )))%
\big\|_{L^{p}(\mathbb{R}^{n},|\cdot |^{\alpha })}\lesssim \lambda ^{-\frac{%
		n+\alpha }{p}}\sum_{i=0}^{3}\big\|\varphi _{i}\ast f\big\|_{L^{p}(\mathbb{R}%
	^{n},|\cdot |^{\alpha })},
\end{equation*}%
where $v\in \{0,k-1,k,k+1,k+2\}$ and we have we used Lemmas \ref%
{Fefferman-Stein} and \ref{Marschall-ineq}. Using the estimate%
\begin{equation*}
\Big(\sum\limits_{i=0}^{\infty }a_{i}\Big)^{\delta }\leq
\sum\limits_{i=0}^{\infty }a_{i}^{\delta },\quad 0<\delta \leq 1,a_{i}\geq
0,i\in \mathbb{N}_{0},
\end{equation*}%
we get%
\begin{equation*}
\Big\|\Big(\sum\limits_{v=0}^{k+2}2^{vsq}|\mathcal{F}^{-1}(\mathcal{F}%
\varphi _{v}\mathcal{F}(f(\lambda\cdot )))|^{q}\Big)^{1/q}\Big\|_{L^{p}(%
	\mathbb{R}^{n},|\cdot |^{\alpha })}
\end{equation*}%
can be estimated by%
\begin{align*}
	&\sum\limits_{v=0}^{k+2}2^{vs}\Big\|\mathcal{F}^{-1}(\mathcal{F}\varphi _{v}%
	\mathcal{F}(f(\lambda \cdot )))\Big\|_{L^{p}(\mathbb{R}^{n},|\cdot |^{\alpha
		})} \\
	&\leq c\Big(\sum\limits_{v=1}^{k-1}2^{vs}+1\Big)\lambda ^{-\frac{n+\alpha }{%
			p}}\big\|\varphi _{0}\ast f\big\|_{L_{p}(\mathbb{R}^{n},|x|^{\alpha
		})}+c\lambda ^{-\frac{n+\alpha }{p}}\sum_{i=1}^{3}\big\|\varphi _{i}\ast f%
	\big\|_{L^{p}(\mathbb{R}^{n},|\cdot |^{\alpha })} \\
	&\leq c\lambda ^{s-\frac{n+\alpha }{p}}\big\|\varphi _{0}\ast f\big\|%
	_{L_{p}(\mathbb{R}^{n},|x|^{\alpha })}+c\lambda ^{s-\frac{n+\alpha }{p}%
	}\sum_{i=1}^{3}\big\|\varphi _{i}\ast f\big\|_{L^{p}(\mathbb{R}^{n},|\cdot
		|^{\alpha })} \\
	&\leq c\lambda ^{s-\frac{n+\alpha }{p}}\big\|f\big\|_{F_{p,q}^{s}(\mathbb{R}%
		^{n},|x|^{\alpha })},
\end{align*}%
since $s>0$, where the positive constant $c$ is independent of $k$. The
proof is complete.

\end{proof}

\begin{remark}
We would like mention that Theorem \ref{dilation} can be extended to $%
0<p<\infty ,0<q\leq \infty ,-n<\alpha <n(p-1)$ and\ $s>\max \big(0,\frac{n}{%
p }-n\big).$
\end{remark}

Let $k\in \mathbb{N},a,b\in \mathbb{R}$, $\beta >0$\ and $\lambda \geq 1+25%
\sqrt{n}$ be such that $a<b$. Let $j\in \mathbb{N},z^{j}=\big(%
z_{1}^{j},0,...,0\big)$ with $z_{1}^{1}=0$, $z_{1}^{j}=\frac{1}{4\sqrt{n}
j^{\lambda -1}},j\geq 2$.\ We set 
\begin{equation*}
P_{k}=\Big\{x:\frac{1}{16\sqrt{n}k^{\lambda }}\leq x_{i}\leq \frac{1}{2\sqrt{
n}k^{\lambda }},\ i=2,...,n,\ \frac{a}{k^{\lambda +\beta }}<x_{1}-z_{1}^{k}<%
\frac{b}{k^{\lambda +\beta }}\Big\}
\end{equation*}%
and 
\begin{equation*}
P_{k}^{\ast }=\Big\{x:\frac{1}{16\sqrt{n}k^{\lambda }}\leq x_{i}\leq \frac{1%
}{8\sqrt{n}k^{\lambda }},\ i=2,...,n,\ \frac{a}{k^{\lambda +\beta }}%
<x_{1}-z_{1}^{k}<\frac{a+b}{2k^{\lambda +\beta }}\Big\}.
\end{equation*}%
The following lemma is very important for what will follow.

\begin{lemma}
\label{Dah}Let $\gamma =\lfloor \big(4\sqrt{n}(|b|+|a|)\big)^{\frac{1}{\beta 
}}\rfloor +3$. $\newline
\mathrm{(i)}$ There exists $v\in \mathbb{N}$ such that 
\begin{equation*}
P_{k}\cap P_{r}=\emptyset \quad \text{if}\quad r\neq k,\quad r,k\geq \max
(v,\gamma ).
\end{equation*}
$\mathrm{(ii)}$ Let $0<t<\frac{T}{8k^{\lambda +\beta }},T=b-a$. If $k\geq
\gamma $ then 
\begin{equation*}
x\in P_{k}^{\ast }\quad \text{implies}\quad x+\ell h\in P_{k}
\end{equation*}
for any $\ell =0,1,2,h=(h_{1},...,h_{n}),0<h_{i}<\frac{t}{\sqrt{n}}
,i=1,...,n $.
\end{lemma}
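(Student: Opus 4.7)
Each $P_k$ is a rectangular box in $\mathbb{R}^{n}$ whose transverse $x_i$-ranges ($i\ge 2$) have length $\asymp k^{-\lambda}$, whose $x_1$-range is centered near $z_1^{k}$ with width $\asymp (b-a)\,k^{-\lambda-\beta}$, and whose consecutive $x_1$-centers $z_1^{k},z_1^{k+1}$ are spaced by $\asymp (\lambda-1)\,k^{-\lambda}$. The plan is to exploit the two different scales $k^{-\lambda}$ and $k^{-\lambda-\beta}$ and verify both claims by coordinate-wise estimates.

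\textbf{Part (i).} Without loss of generality $r>k$. If $r>2k$, then since $\lambda\ge 1+25\sqrt{n}\ge 3$,
\[
\tfrac{1}{2\sqrt{n}\, r^{\lambda}}<\tfrac{1}{2\sqrt{n}\,(2k)^{\lambda}}\le \tfrac{1}{16\sqrt{n}\,k^{\lambda}},
\]
so the transverse $x_i$-ranges ($i\ge 2$) of $P_r$ and $P_k$ are already disjoint. If $k<r\le 2k$, I would use the $x_1$-coordinate: the mean value theorem applied to $x\mapsto \tfrac{1}{4\sqrt{n}\, x^{\lambda-1}}$ gives
\[
z_1^{k}-z_1^{r}=\tfrac{1}{4\sqrt{n}}\Bigl(\tfrac{1}{k^{\lambda-1}}-\tfrac{1}{r^{\lambda-1}}\Bigr)\ge \tfrac{(\lambda-1)(r-k)}{4\sqrt{n}\, r^{\lambda}},
\]
which is of order $k^{-\lambda}$, while the $x_1$-widths of both boxes are bounded by $(|a|+|b|)\,k^{-\lambda-\beta}$, a factor $k^{-\beta}$ smaller. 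Choosing $v$ sufficiently large (depending on $\lambda,\beta,a,b$) therefore forces the $x_1$-intervals of $P_k$ and $P_r$ to be separated.

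\textbf{Part (ii).} Fix $x\in P_k^{\ast}$, $\ell\in\{0,1,2\}$, and $h$ with $0<h_i<t/\sqrt{n}$; the plan is to check each coordinate of $x+\ell h$ against the defining inequalities of $P_k$. For $i\ge 2$, the lower bound $x_i+\ell h_i\ge x_i\ge \tfrac{1}{16\sqrt{n}\,k^{\lambda}}$ is immediate, while the upper bound gives
\[
x_i+\ell h_i<\tfrac{1}{8\sqrt{n}\,k^{\lambda}}+\tfrac{2t}{\sqrt{n}}<\tfrac{1}{8\sqrt{n}\,k^{\lambda}}+\tfrac{T}{4\sqrt{n}\,k^{\lambda+\beta}}\le \tfrac{1}{2\sqrt{n}\,k^{\lambda}},
\]
provided $k^{\beta}\ge 2T/3$, which follows from $k\ge\gamma$ since $\gamma^{\beta}\ge 4\sqrt{n}(|a|+|b|)\ge 4\sqrt{n}\,T$. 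For $i=1$ the lower bound on $(x_1+\ell h_1)-z_1^{k}$ is inherited from $P_k^{\ast}$; the upper bound reduces to $T/(4\sqrt{n})<T/2$, which holds for every $n\ge 1$.

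\textbf{Main obstacle.} The delicate case in (i) is $r=k+1$, where the separation of consecutive $x_1$-centers has the same $k$-exponent as the transverse box size. Disjointness there genuinely hinges on the $x_1$-widths being an extra factor $k^{-\beta}$ smaller, and this is why $v$ in (i) must in general be taken larger than $\gamma$, to absorb the $\lambda$-dependent constants produced by the mean value estimate.
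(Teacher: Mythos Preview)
Your argument is correct, and for part~(ii) it is essentially identical to the paper's: both proceed coordinate by coordinate, exploit $k\ge\gamma$ to get $k^{\beta}\ge 4\sqrt{n}(|a|+|b|)\ge 4\sqrt{n}T$, and reduce the $x_1$-upper bound to the inequality $\tfrac{T}{4\sqrt{n}}<\tfrac{T}{2}$.

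Part~(i), however, follows a genuinely different route. The paper never looks at the transverse coordinates $x_2,\dots,x_n$; it works exclusively with $x_1$. It introduces $U_j=j-(j+1)\bigl(\tfrac{j}{j+1}\bigr)^{\lambda}$, uses $\lim_{j\to\infty}\bigl(U_j-4\sqrt{n}(j/(j+1))^{\lambda}\bigr)=\lambda-4\sqrt{n}-1\ge 21\sqrt{n}$ (this is precisely where the hypothesis $\lambda\ge 1+25\sqrt{n}$ is spent) to select $v$, and then shows directly that $x\in P_r$ with $r\ne k$, $r,k\ge\max(v,\gamma)$ forces $|x_1-z_1^{k}|>5/k^{\lambda}$, whereas membership in $P_k$ forces $|x_1-z_1^{k}|<1/(4\sqrt{n}k^{\lambda})$. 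Your approach instead splits into the regimes $r>2k$ and $k<r\le 2k$: in the former you use the transverse intervals (needing only $\lambda\ge 3$), and in the latter you use the mean value theorem on $x\mapsto x^{1-\lambda}$ to bound $z_1^{k}-z_1^{r}$ from below and compare with the $x_1$-width $(|a|+|b|)k^{-\lambda-\beta}$. Both are valid. The paper's approach yields cleaner constants and makes the role of the threshold $1+25\sqrt{n}$ transparent; your case split is perhaps more intuitive but, as you implicitly acknowledge, the crude bound $r^{\lambda}\le (2k)^{\lambda}=2^{\lambda}k^{\lambda}$ in the second regime produces a $v$ that depends on $\lambda$ through a factor $2^{\lambda}$, which is harmless for the application (where $v$ only needs to exist) but less tidy. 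Your closing remark on the obstacle at $r=k+1$ is exactly the right diagnosis and matches the paper's reason for separating $v$ from $\gamma$.
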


\begin{proof}
We will do the proof in two steps.

\textbf{Step\ 1.}\ Proof of\ (i). Define%
	\begin{equation*}
	U_{j}=j-(j+1)\big(\frac{j}{j+1}\big)^{\lambda },\quad j\in \mathbb{N}.
	\end{equation*}%
	Let $v\in \mathbb{N}$ be such that%
	\begin{equation}
	U_{j}-4\sqrt{n}\big(\frac{j}{j+1}\big)^{\lambda }\geq 20\sqrt{n},\quad j\geq
	v,  \label{Estimate-h}
	\end{equation}%
	which is possible since 
	\begin{equation*}
	\lim_{j\longrightarrow \infty }\big(U_{j}-4\sqrt{n}\big(\frac{j}{j+1}\big)%
	^{\lambda }\big)=\lambda -4\sqrt{n}-1\geq21\sqrt{n}.
	\end{equation*}%
	We claim that 
	\begin{equation}
	x\in P_{k}\quad \text{implies}\quad |x-z^{k}|<\frac{1}{k^{\lambda }}.
	\label{pro-P}
	\end{equation}%
	Indeed, observe that 
	\begin{equation*}
	\frac{-1}{4\sqrt{n}k^{\lambda }}<\frac{a}{k^{\lambda +\beta }}%
	<x_{1}-z_{1}^{k}<\frac{b}{k^{\lambda +\beta }}<\frac{1}{4\sqrt{n}k^{\lambda }%
	}.
	\end{equation*}%
	Let $k,r\geq \max (v,\gamma )$. It is obvious that 
	\begin{equation*}
	4\sqrt{n}(z_{1}^{k}-z_{1}^{r})=\frac{1}{k^{\lambda -1}}-\frac{1}{r^{\lambda
			-1}}.
	\end{equation*}%
	If $r>k$, then 
	\begin{equation*}
	\frac{1}{k^{\lambda -1}}-\frac{1}{r^{\lambda -1}}\geq \frac{1}{k^{\lambda -1}%
	}-\frac{1}{(k+1)^{\lambda -1}}=\frac{1}{k^{\lambda }}\big(k-(k+1)\big(\frac{k%
	}{k+1}\big)^{\lambda }\big)=\frac{1}{k^{\lambda }}U_{k}.
	\end{equation*}%
	Let $x\in P_{r}$, with $r>k$. The triangle inequality gives 
	\begin{equation*}
	|x_{1}-z_{1}^{k}|\geq |z_{1}^{k}-z_{1}^{r}|-|x_{1}-z_{1}^{r}|\geq
	|z_{1}^{k}-z_{1}^{r}|-\frac{1}{r^{\lambda }}\geq \frac{1}{4\sqrt{n}%
		k^{\lambda }}U_{k}-\frac{1}{(k+1)^{\lambda }}.
	\end{equation*}%
	But%
	\begin{equation*}
	\frac{1}{4\sqrt{n}k^{\lambda }}U_{k}-\frac{1}{(k+1)^{\lambda }}=\frac{1}{4%
		\sqrt{n}k^{\lambda }}\big(U_{k}-4\sqrt{n}\big(\frac{k}{k+1}\big)^{\lambda }%
	\big).
	\end{equation*}%
	Thanks to $\mathrm{\eqref{Estimate-h}}$\ we end up with $|x_{1}-z_{1}^{k}|>%
	\frac{5}{k^{\lambda }}$\ for any $k\geq \max (v,\gamma )$. That gives\textrm{%
		,} 
	\begin{equation*}
	P_{k}\cap P_{r}=\emptyset \quad \text{if}\quad r>k\geq \max (v,\gamma ).
	\end{equation*}%
	Now, let $x\in P_{r}$ with $\max (v,\gamma )\leq r<k$. Again by the triangle
	inequality, we obtain 
	\begin{align*}
		|x_{1}-z_{1}^{k}| &\geq |z_{1}^{k}-z_{1}^{r}|-|x_{1}-z_{1}^{r}|\geq
		|z_{1}^{k}-z_{1}^{r}|-\frac{1}{r^{\lambda }} \\
		&\geq \frac{1}{4\sqrt{n}}\Big(\frac{1}{r^{\lambda -1}}-\frac{1}{%
			(r+1)^{\lambda -1}}\Big)-\frac{1}{r^{\lambda }} \\
		&=\frac{1}{4\sqrt{n}r^{\lambda }}U_{r}-\frac{1}{r^{\lambda }},
	\end{align*}%
	which yields  $|x_{1}-z_{1}^{k}|>\frac{5}{r^{\lambda }}\geq \frac{5}{%
		k^{\lambda }}$ for any $k>r\geq \max (v,\gamma )$ where we used again $%
	\mathrm{\eqref{Estimate-h}}$. This gives the desired result.
	 
	\textbf{Step\ 2.}\ Proof of\ (ii). Let $ i\in \{2,...,n\}$ and $x\in P_{k}^{\ast }$. We have%
	\begin{equation*}
	\frac{1}{16\sqrt{n}k^{\lambda }}\leq \frac{1}{16\sqrt{n}k^{\lambda }}+\ell
	h_{i}\leq x_{i}+\ell h_{i}\leq \frac{1}{8\sqrt{n}k^{\lambda }}+\ell h_{i}<%
	\frac{1}{8\sqrt{n}k^{\lambda }}+\frac{\ell t}{\sqrt{n}}.
	\end{equation*}
		Since $k\geq \gamma $, w obtain%
	\begin{equation*}
	k^{\beta }\geq 4\sqrt{n}(|b|+|a|)\geq 4\sqrt{n}T.
	\end{equation*}%
	This together with $0<t<\frac{T}{8k^{\lambda +\beta }}$ yields that 
	\begin{equation*}
	\frac{1}{16\sqrt{n}k^{\lambda }}\leq x_{i}+\ell h_{i}<\frac{1}{8\sqrt{n}%
		k^{\lambda }}+\frac{1}{4\sqrt{n}k^{\lambda }}\frac{T}{k^{\beta }}\leq \frac{1%
	}{2\sqrt{n}k^{\lambda }},\quad i=2,...,n.
	\end{equation*}%
	Now,%
	\begin{equation*}
	\frac{a}{k^{\lambda +\beta }}<x_{1}-z_{1}^{k}+\ell h_{1}<\frac{a+b}{%
		2k^{\lambda +\beta }}+\frac{2t}{\sqrt{n}}<\frac{a+b}{2k^{\lambda +\beta }}+%
	\frac{T}{2k^{\lambda +\beta }}\leq \frac{b}{k^{\lambda +\beta }},
	\end{equation*}%
	since $T=b-a$. The proof is complete.
	
\end{proof}

\section{Proof of the results}

The main aim of this section is to present the proof of Theorems \ref%
{Triviality1}-\ref{Triviality1 copy(1)}\textbf{. } We follow the same
notations as in \cite[Chapter 5]{RS96}.

\begin{definition}
$\mathrm{(i)}$ For $f\in \mathcal{S}^{\prime }(\mathbb{R}^{n})$ we define a
distribution $\bar{f}$ by 
\begin{equation*}
\bar{f}(\varphi )=\overline{f(\bar{\varphi})},\quad \varphi \in \mathcal{S}( 
\mathbb{R}^{n}).
\end{equation*}
$\mathrm{(ii)}$ The space of real-valued distributions $\mathbb{S}^{\prime
}( \mathbb{R}^{n})$ is defined to be 
\begin{equation*}
\mathbb{S}^{\prime }(\mathbb{R}^{n})=\{f\in \mathcal{S}^{\prime }(\mathbb{R}
^{n}):\bar{f}=f\}.
\end{equation*}
$\mathrm{(iii)}$ Let $A$ be a complex-valued, normed distribution space such
that $A\hookrightarrow \mathcal{S}^{\prime }(\mathbb{R}^{n})$. Then we
define the real-valued part $\mathbb{A}$ by $\mathbb{A=}A\cap $ $\mathbb{S}
^{\prime }(\mathbb{R}^{n})$ equipped with the same norm as $A$.
\end{definition}

We set%
\begin{equation*}
\mathbb{F}_{p,q}^{s}(\mathbb{R}^{n},|\cdot |^{\alpha })=\{f\in F_{p,q}^{s}(%
\mathbb{R}^{n},|\cdot |^{\alpha }):f\text{ is real-valued}\},
\end{equation*}%
\begin{equation*}
\mathbb{B}_{p,q}^{s}(\mathbb{R}^{n},|\cdot |^{\alpha })=\{f\in B_{p,q}^{s}(%
\mathbb{R}^{n},|\cdot |^{\alpha }):f\text{ is real-valued}\}.
\end{equation*}

\begin{proof}[\textbf{\ Proof of Theorem \protect\ref{Triviality1}}]
	The proof is based on \cite[Theorem 1]{Si97} and  \cite[Theorem 5.3.1/3]%
	{RS96}. By the embeddings $A_{p,q}^{s}(\mathbb{R}^{n},|\cdot |^{\alpha
	})\hookrightarrow B_{p,\infty }^{s}(\mathbb{R}^{n},|\cdot |^{\alpha })$ and $%
	B_{p,1}^{s}(\mathbb{R}^{n},|\cdot |^{\alpha })\hookrightarrow A_{p,q}^{s}(%
	\mathbb{R}^{n},|\cdot |^{\alpha })$, we need only prove that any composition
	operator that takes $B_{p,1}^{s}(\mathbb{R}^{n},|\cdot |^{\alpha })$ to $%
	B_{p,\infty }^{s}(\mathbb{R}^{n},|\cdot |^{\alpha })$ is linear.
	
	\textbf{Step 1. }First we need to construct a suitable function. Assume that 
	$G$ is not of the form $\mathrm{\eqref{linear}}$. Since $G\in C^{2}(\mathbb{R%
	})$ there exist two real numbers $a$ and $b$, and a number $C>0$ such that%
	\begin{equation*}
	|G^{(2)}(x)|\geq C\quad x\in I=[a,b].
	\end{equation*}%
	Using the mean-value theorem for higher order differences, see \cite[\S 60]%
	{He90} we obtain%
	\begin{equation}
	|(\Delta _{h}^{2}G)(x)|=|G^{(2)}(\xi )||h|^{2},\quad h\neq 0,\quad \xi \in
	]x,x+2h[,  \label{est-G-1}
	\end{equation}%
	which yields that%
	\begin{equation}
	|(\Delta _{h}^{2}G)(x)|\geq C|h|^{2}\quad \text{if}\quad x,x+2h\in I.
	\label{est-G-2}
	\end{equation}%
	Let $\theta ,\eta \in \mathcal{S}(\mathbb{R}^{n})$ be compactly supported
	positive functions such that%
	\begin{equation*}
	\text{supp}\theta \subset \big\{x:|x_{i}|\leq 4,\quad i=1,2,...,n\big\},
	\end{equation*}%
	\begin{equation*}
	\theta (x)=1,\quad \text{if}\quad |x_{i}|\leq 2,\quad i=1,2,...,n,
	\end{equation*}%
	\begin{equation*}
	\text{supp}\eta \subset \big\{x:|x_{i}|\leq 1,\quad i=1,2,...,n\big\},\quad %
	\big\|\eta \big\|_{1}=1
	\end{equation*}%
	and%
	\begin{equation*}
	\eta (x)=1,\quad \text{if}\quad |x_{i}|\leq \frac{1}{2},\quad i=1,2,...,n.
	\end{equation*}%
	Let $\lambda >1,j\in \mathbb{N},z^{j}=\big(z_{1}^{j},z_{2}^{j},...,z_{n}^{j}%
	\big)$ with $z_{1}^{1}=0$, $z_{1}^{j}=\frac{1}{4\sqrt{n}j^{\lambda -1}}%
	,j\geq 2$ and $z_{2}^{j}=\cdot \cdot \cdot =z_{n}^{j}=0,j\in \mathbb{N}$.
	Consider%
	\begin{equation*}
	g_{j}(x)=j^{\lambda }(x_{1}-z_{1}^{j})\theta _{j}\ast \breve{\eta}_{j}(x),
	\end{equation*}%
	where $x\in \mathbb{R}^{n},j\in \mathbb{N},\theta _{j}=\theta (j^{\lambda
	}\cdot )$, $\breve{\eta}_{j}=\frac{\eta _{j}}{\big\|\eta _{j}\big\|_{1}},$%
	\begin{equation*}
	\eta _{j}=\eta (j^{\lambda }(\cdot -z^{j})),\quad j\in \mathbb{N}.
	\end{equation*}%
	We have%
	\begin{equation*}
	\text{supp}\theta _{j}\ast \eta _{j}\subset \big\{x:|x_{i}-z_{i}^{j}|\leq
	5j^{-\lambda },\quad i=1,2,...,n\big\}
	\end{equation*}%
	and%
	\begin{equation*}
	\theta _{j}\ast \eta _{j}\equiv 1
	\end{equation*}%
	on%
	\begin{equation*}
	\big\{x:|x_{i}-z_{i}^{j}|\leq j^{-\lambda },\quad i=1,2,...,n\big\}.
	\end{equation*}%
	Thanks to Theorem \ref{dilation} there exists a positive constant $c$ such
	that%
	\begin{align*}
		\big\|g_{j}\big\|_{B_{p,1}^{s}(\mathbb{R}^{n},|\cdot |^{\alpha })} &\leq
		cj^{\lambda (s-\frac{n+\alpha }{p})}\big\|g_{j}(j^{-\lambda }\cdot )\big\|%
		_{B_{p,1}^{s}(\mathbb{R}^{n},|\cdot |^{\alpha })} \\
		&=cj^{\lambda (s-\frac{n+\alpha }{p})}\big\|\vartheta _{j}\big\|%
		_{B_{p,1}^{s}(\mathbb{R}^{n},|\cdot |^{\alpha })}
	\end{align*}%
	for any $j\in \mathbb{N}$, where the positive constant $c$ is independent of 
	$j$. Here 
	\begin{equation*}
	\vartheta _{j}(x)=(x_{1}-j^{\lambda }z_{1}^{j})\theta \ast \tau _{j^{\lambda
		}z^{j}}\eta (x),\quad x\in \mathbb{R}^{n}.
	\end{equation*}%
	Obviously $\vartheta _{j}=\omega \ast \tau _{j^{\lambda }z^{j}}\eta +\theta
	\ast \tau _{j^{\lambda }z^{j}}\tilde{\eta},$ where $\tilde{\eta}%
	(x)=x_{1}\eta (x)$, with $\omega (x)=x_{1}\theta (x),x\in \mathbb{R}^{n}$.
	Let $\{\mathcal{F}\varphi _{l}\}_{l\in \mathbb{N}_{0}}$ be a partition of
	unity. We claim that%
	\begin{equation}
	|\varphi _{l}\ast \omega \ast \tau _{j^{\lambda }z^{j}}\eta |\lesssim j^{L}%
	\mathcal{M}(\varphi _{l}\ast \omega )  \label{maximal2}
	\end{equation}%
	and%
	\begin{equation*}
	|\varphi _{l}\ast \theta \ast \tau _{j^{\lambda }z^{j}}\tilde{\eta}|\lesssim
	j^{L}\mathcal{M}(\varphi _{l}\ast \theta ),
	\end{equation*}%
	where $L>13n$ and the implicit constant is independent of $l$ and $j$. Hence%
	\begin{equation*}
	\big\|g_{j}\big\|_{B_{p,1}^{s}(\mathbb{R}^{n},|\cdot |^{\alpha })}\lesssim
	j^{\lambda (s-\frac{n+\alpha }{p})+L}.
	\end{equation*}%
	We prove our claim. By similarity, we prove only \eqref{maximal2}. Let $%
	x,y\in \mathbb{R}^{n}\ $and $j\in \mathbb{N}$. Since $\eta $ is a Schwartz
	function, then%
	\begin{align*}
		|\tau _{j^{\lambda }z^{j}}\eta (x-y)| &\leq c(1+|x-y-j^{\lambda
		}z^{j}|)^{-L} \\
		&\leq c(1+|x-y|)^{-L}(1+j^{\lambda }|z^{j}|)^{L} \\
		&\leq cj^{L}(1+|x-y|)^{-L}
	\end{align*}%
	where the positive constant $c$ is indepedent of $l,x,y$ and $j$. We set $%
	\varpi _{L}(x)=(1+|x|)^{-L},$ $x\in \mathbb{R}^{n}$. Hence%
	\begin{align}
	&|\varphi _{l}\ast \omega \ast \tau _{j^{\lambda }z^{j}}\eta (x)|  \notag \\
	&\lesssim \int_{\mathbb{R}^{n}}|\varphi _{l}\ast \omega (y)||\tau
	_{j^{\lambda }z^{j}}\eta (x-y)|dy  \notag \\
	&\lesssim j^{L}\varpi _{L}\ast |\varphi _{l}\ast \omega |(x)  \notag \\
	&\lesssim j^{L}\varpi _{L}\chi _{B(x,2)}\ast |\varphi _{l}\ast \omega
	|(x)+j^{L}\varpi _{L}\chi _{\mathbb{R}^{n}\backslash B(x,2)}\ast |\varphi
	_{l}\ast \omega |(x).  \label{estimate-f1.2}
	\end{align}%
	Obviously, the first term of \eqref{estimate-f1.2} is bounded by $c\mathcal{M%
	}(\varphi _{l}\ast \omega )(x)$. We have%
	\begin{align*}
		\varpi _{L}\chi _{\mathbb{R}^{n}\backslash B(x,2)}\ast |\varphi _{l}\ast
		\omega |(x) &=\sum_{i=1}^{\infty }\varpi _{L}\chi _{B(x,2^{i+1})\backslash
			B(x,2^{i})}\ast |\varphi _{l}\ast \omega |(x) \\
		&\leq \sum_{i=1}^{\infty }2^{-iL}\varpi _{L}\chi _{B(x,2^{i+1})}\ast
		|\varphi _{l}\ast \omega |(x) \\
		&\lesssim \mathcal{M}(\varphi _{l}\ast \omega )(x)\sum_{i=1}^{\infty
		}2^{i(n-L)} \\
		&\lesssim \mathcal{M}(\varphi _{l}\ast \omega )(x).
	\end{align*}
	Assume that $\lambda \geq 1+25\sqrt{n}$. Let $\beta >0,\gamma =\lfloor \big(4%
	\sqrt{n}(|b|+|a|)\big)^{\frac{1}{\beta }}\rfloor +3$ and%
	\begin{equation*}
	f(x)=\sum_{j=\max (v,\gamma )}^{\infty }j^{\beta }g_{j}(x),\quad x\in 
	\mathbb{R}^{n}.
	\end{equation*}%
	Let us prove that $f$ belongs to $B_{p,1}^{s}(\mathbb{R}^{n},|\cdot
	|^{\alpha })$. Obviously%
	\begin{align*}
		\big\|f\big\|_{B_{p,1}^{s}(\mathbb{R}^{n},|\cdot |^{\alpha })} &\leq
		c\sum_{j=\max (v,\gamma )}^{\infty }j^{\beta }\big\|g_{j}\big\|%
		_{B_{p,1}^{s}(\mathbb{R}^{n},|\cdot |^{\alpha })} \\
		&\leq c\sum_{j=\max (v,\gamma )}^{\infty }j^{\beta +L+\lambda (s-\frac{%
				n+\alpha }{p})}.
	\end{align*}%
	If%
	\begin{equation}
	\beta +L+\lambda \big(s-\frac{n+\alpha }{p}\big)<-1,
	\label{f-belonging in B}
	\end{equation}%
	then we have $f\in B_{p,1}^{s}(\mathbb{R}^{n},|\cdot |^{\alpha })$.
	
	\textbf{Step 2. }Let 
	\begin{equation}
	\max \Big(p,\frac{n+\alpha }{2-s+\frac{n+\alpha }{p}}\Big)<p_{1}<\frac{%
		n+\alpha -1}{\frac{n+\alpha }{p}-s+1}\quad \text{and}\quad s_{1}=s-\frac{%
		n+\alpha }{p}+\frac{n+\alpha }{p_{1}},  \label{assumption1}
	\end{equation}%
	which is possible, since $s>1+\frac{1}{p}$. That choice guarantees%
	\begin{equation*}
	p<p_{1},\quad 0<s_{1}<2\quad \text{and}\quad B_{p,\infty }^{s}(\mathbb{R}%
	^{n},|\cdot |^{\alpha })\hookrightarrow B_{p_{1},\infty }^{s_{1}}(\mathbb{R}%
	^{n},|\cdot |^{\alpha }),
	\end{equation*}%
	by Theorem \ref{embeddings5}. We will prove that 
	\begin{equation}
	T_{G}(f)\notin B_{p_{1},\infty }^{s_{1}}(\mathbb{R}^{n},|\cdot |^{\alpha }).
	\label{Case1}
	\end{equation}%
	As in \cite[Theorem 1]{Si97}, see also \cite[Theorem 5.3.1/3]{RS96}, we use
	the two sequences of cubes $P_{k}$ and $P_{k}^{\ast },k\geq \max (v,\gamma )$%
	. From Lemma \ref{Dah} and $\mathrm{\eqref{pro-P}}$, we get 
	\begin{equation}
	f(y)=k^{\beta }g_{k}(y)=k^{\beta +\lambda }(y_{1}-z_{1}^{k})  \label{key}
	\end{equation}%
	for any $y\in P_{k}$ and any $k\geq \max (v,\gamma )$. Let%
	\begin{equation*}
	\varrho (t)=\Big(\frac{T}{8t}\Big)^{\frac{1}{\lambda +\beta }},\quad
	t>0,\quad T=b-a
	\end{equation*}%
	and $0<t<\frac{T}{8k^{\lambda +\beta }}$. Again, from Lemma \ref{Dah} if $%
	\max (v,\gamma )\leq k\leq \varrho (t)$ then 
	\begin{equation}
	x\in P_{k}^{\ast }\quad \text{implies}\quad x+\ell h\in P_{k}  \label{cube}
	\end{equation}%
	for any $\ell =0,1,2,h=(h_{1},...,h_{n}),0<h_{i}<\frac{t}{\sqrt{n}},i=1,...,n
	$. We have 
	\begin{equation*}
	|\Delta _{h}^{2}(T_{G}(f))(x)|=\big|%
	\sum_{i=0}^{2}C_{i}^{2}(-1)^{i}G(f(x+(2-i)h))\big|,
	\end{equation*}%
	where $C_{i}^{2},i\in \{0,1,2\}$\ are the binomial coefficients. Let $x\in
	P_{k}^{\ast }$. From $\mathrm{\eqref{key}}$ and $\mathrm{\eqref{cube}}$,
	with the help of $\mathrm{\eqref{est-G-1}}$ and $\mathrm{\eqref{est-G-2}}$,
	we obtain%
	\begin{align*}
		|\Delta _{h}^{2}(T_{G}(f))(x)| &=\big|\sum_{i=0}^{2}C_{i}^{2}(-1)^{i}G(k^{%
			\beta +\lambda }(x_{1}+(2-i)h_{1}-z_{1}^{k}))\big| \\
		&=|\Delta _{k^{\lambda +\beta }h_{1}}^{2}G(k^{\lambda +\beta
		}(x_{1}-z_{1}^{k}))| \\
		&\geq C|h_{1}|^{2}k^{2(\lambda +\beta )},
	\end{align*}%
	where the positive constant $C$ is independent of $k$ and $h$. Consequently%
	\begin{equation*}
	d_{t}^{2}(T_{G}(f))(x)=t^{-n}\int_{|h|\leq t}\left\vert \Delta
	_{h}^{2}(T_{G}(f))(x)\right\vert dh\geq ct^{2}k^{2(\lambda +\beta )},
	\end{equation*}%
	which yields that 
	\begin{equation*}
	\big\|d_{t}^{2}(T_{G}(f))\big\|_{L^{p_{1}}(\mathbb{R}^{n},|\cdot |^{\alpha
		})}^{p_{1}}
	\end{equation*}%
	is greater than%
	\begin{align*}
		ct^{2p_{1}}\sum_{k=\max (v,\gamma )}^{\lfloor \varrho (t)\rfloor
		}k^{2(\lambda +\beta )p_{1}}\int_{P_{k}^{\ast }}|x|^{\alpha }dx &\geq
		ct^{2p_{1}}\sum_{k=\max (v,\gamma )}^{\lfloor \varrho (t)\rfloor
		}k^{2(\lambda +\beta )p_{1}-\lambda \alpha }\int_{P_{k}^{\ast }}dx \\
		&\geq ct^{2p_{1}}\sum_{k=\lfloor \frac{1}{4}\varrho (t)\rfloor +1}^{\lfloor
			\varrho (t)\rfloor }k^{(\lambda +\beta )(2p_{1}-1)-\lambda (n-1+\alpha )}
	\end{align*}%
	for any $t>0$ sufficiently small, with the help of the fact that $\alpha
	\geq 0$. Therefore%
	\begin{equation*}
	\big\|T_{G}(f)\big\|_{B_{p_{1},\infty }^{s_{1}}(\mathbb{R}^{n},|\cdot
		|^{\alpha })}^{p_{1}}\geq c\sup_{t>0}t^{(2-s_{1})p_{1}}\sum_{k=\lfloor \frac{%
			1}{4}\varrho (t)\rfloor +1}^{\lfloor \varrho (t)\rfloor }k^{(\lambda +\beta
		)(2p_{1}-1)-\lambda (n-1+\alpha )}.
	\end{equation*}%
	We claim that%
	\begin{equation*}
	\sum_{k=\lfloor \frac{1}{4}\varrho (t)\rfloor +1}^{\lfloor \varrho
		(t)\rfloor }k^{(\lambda +\beta )(2p_{1}-1)-\lambda (n-1+\alpha )}\approx t^{-%
		\frac{1}{\lambda +\beta }\big((\lambda +\beta )(2p_{1}-1)-\lambda
		(n-1+\alpha )+1\big)},
	\end{equation*}%
	which yields $\mathrm{\eqref{Case1}}$ if $\frac{\lambda (n-1+\alpha )-1}{%
		(\lambda +\beta )p_{1}}<s_{1}-\frac{1}{p_{1}}$ provided $\beta $ fulfills $%
	\mathrm{\eqref{f-belonging in B}}$. We may choose%
	\begin{equation*}
	\max \Big(1+25\sqrt{n},\frac{\beta +1+L}{\frac{n+\alpha }{p}-s}\Big)<\lambda
	<\frac{\frac{1}{p_{1}}+\beta \big(s_{1}-\frac{1}{p_{1}}\big)}{\frac{n+\alpha 
		}{p}-s}.
	\end{equation*}%
	Our assumptions leads to $\mathrm{\eqref{Case1}}$ by taking%
	\begin{equation*}
	\beta >\max \Big(0,\frac{(1+25\sqrt{n})\big(\frac{n+\alpha }{p}-s\big)-\frac{%
			1}{p_{1}}}{s_{1}-\frac{1}{p_{1}}},\frac{\frac{1}{p_{1}}-1-L}{1-s_{1}+\frac{1%
		}{p_{1}}}\Big),
	\end{equation*}%
	which is possible in view of $\mathrm{\eqref{assumption1}}$. Since $G(0)=0$
	is necessary for $\mathrm{\eqref{Condition2}}$, it follows that\ $%
	G(t)=ct,t\in \mathbb{R}$ for some constant $c$. We prove our claim. Let $%
	k\in \mathbb{N}$ be such that%
	\begin{equation*}
	\lfloor \frac{1}{4}\varrho (t)\rfloor +1\leq k\leq \lfloor \varrho
	(t)\rfloor ,\quad t>0,
	\end{equation*}%
	which yields that 
	\begin{equation*}
	\frac{1}{4}\varrho (t)\leq k\leq \varrho (t).
	\end{equation*}%
	Thus%
	\begin{align*}
		\sum_{k=\lfloor \frac{1}{4}\varrho (t)\rfloor +1}^{\lfloor \varrho
			(t)\rfloor }k^{(\lambda +\beta )(2p_{1}-1)-\lambda (n-1+\alpha )} &\approx
		\sum_{k=\lfloor \frac{1}{4}\varrho (t)\rfloor +1}^{\lfloor \varrho
			(t)\rfloor }(\varrho (t))^{(\lambda +\beta )(2p_{1}-1)-\lambda (n-1+\alpha )}
		\\
		&\approx t^{-\big((2p_{1}-1)-\frac{\lambda (n-1+\alpha )}{\lambda +\beta }%
			\big)}\sum_{k=\lfloor \frac{1}{4}\varrho (t)\rfloor +1}^{\lfloor \varrho
			(t)\rfloor }1 \\
		&\approx t^{-\big((2p_{1}-1)-\frac{\lambda (n-1+\alpha )}{\lambda +\beta }%
			\big)}(\lfloor \varrho (t)\rfloor -\lfloor \frac{1}{4}\varrho (t)\rfloor ),
	\end{align*}%
	since $\varrho (t)=\Big(\frac{T}{8t}\Big)^{\frac{1}{\lambda +\beta }}$.
	Observe that%
	\begin{equation*}
	\lfloor \varrho (t)\rfloor -\lfloor \frac{1}{4}\varrho (t)\rfloor >\frac{3}{4%
	}\varrho (t)-1\geq \frac{1}{2}\varrho (t)
	\end{equation*}%
	and%
	\begin{equation*}
	\lfloor \varrho (t)\rfloor -\lfloor \frac{1}{4}\varrho (t)\rfloor <\frac{3}{4%
	}\varrho (t)+1\leq \varrho (t)
	\end{equation*}%
	for sufficiently small $t>0$. Therefore our claim is proved. The proof is
	complete.
\end{proof}

From \eqref{Sobolev}, we immediately obtain the following statement.

\begin{corollary}
Let\ $1<p<\infty ,m\in \mathbb{N}$ and $0\leq \alpha <n(p-1)$. Let $G\in
C^{2}(\mathbb{R})$ and $T_{G}$ be a composition operator.\ Suppose 
\begin{equation*}
2\leq m<\frac{n+\alpha }{p}
\end{equation*}
and the acting condition 
\begin{equation*}
T_{G}(\mathbb{W}_{p}^{m}(\mathbb{R}^{n},|\cdot |^{\alpha }))\subset \mathbb{%
W }_{p}^{m}(\mathbb{R}^{n},|\cdot |^{\alpha }).
\end{equation*}
Then 
\begin{equation*}
G(t)=ct,\quad t\in \mathbb{R}
\end{equation*}
for some constant $c$.
\end{corollary}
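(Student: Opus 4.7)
The plan is to reduce the corollary to Theorem \ref{Triviality1} via the Sobolev identification stated in \eqref{Sobolev}. Specifically, for $m\in\mathbb{N}_0$, $1<p<\infty$ and $-n<\alpha<n(p-1)$ we have $F_{p,2}^{m}(\mathbb{R}^{n},|\cdot|^{\alpha})=W_{p}^{m}(\mathbb{R}^{n},|\cdot|^{\alpha})$, and consequently (passing to real-valued parts) $\mathbb{F}_{p,2}^{m}(\mathbb{R}^{n},|\cdot|^{\alpha})=\mathbb{W}_{p}^{m}(\mathbb{R}^{n},|\cdot|^{\alpha})$ with equivalent norms. Under this identification, the acting hypothesis
\begin{equation*}
T_{G}(\mathbb{W}_{p}^{m}(\mathbb{R}^{n},|\cdot|^{\alpha}))\subset\mathbb{W}_{p}^{m}(\mathbb{R}^{n},|\cdot|^{\alpha})
\end{equation*}
becomes
\begin{equation*}
T_{G}(\mathbb{F}_{p,2}^{m}(\mathbb{R}^{n},|\cdot|^{\alpha}))\subset\mathbb{F}_{p,2}^{m}(\mathbb{R}^{n},|\cdot|^{\alpha}),
\end{equation*}
which is precisely condition \eqref{Condition2} with the choices $s=m$ and $q=2$.

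Next I would verify that the remaining hypotheses of Theorem \ref{Triviality1} are satisfied. The upper bound $s<\frac{n+\alpha}{p}$ is immediate from the standing assumption $m<\frac{n+\alpha}{p}$. For the lower bound $s>1+\frac{1}{p}$: since $p>1$ we have $\frac{1}{p}<1$, hence $1+\frac{1}{p}<2\leq m=s$. The remaining hypotheses, namely $1<p<\infty$, $0<q\leq\infty$ (here $q=2$), $0\leq\alpha<n(p-1)$, and $G\in C^{2}(\mathbb{R})$, are given by assumption.

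Therefore Theorem \ref{Triviality1} applies and yields $G(t)=ct$, $t\in\mathbb{R}$, for some constant $c$, as desired. There is no genuine obstacle here; the entire argument is a direct specialization, and the only verification worth noting is that the constraint $m\geq 2$ from the corollary (rather than $m\geq 1$) is exactly what is needed to ensure $m>1+\frac{1}{p}$ for every $p\in(1,\infty)$.
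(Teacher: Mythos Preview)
Your proof is correct and is exactly the paper's approach: the corollary is stated immediately after Theorem \ref{Triviality1} with the remark ``From \eqref{Sobolev}, we immediately obtain the following statement,'' i.e., one uses the identification $W_{p}^{m}(\mathbb{R}^{n},|\cdot|^{\alpha})=F_{p,2}^{m}(\mathbb{R}^{n},|\cdot|^{\alpha})$ and applies Theorem \ref{Triviality1} with $s=m$, $q=2$. Your verification that $m\geq 2$ guarantees $s>1+\tfrac{1}{p}$ is the only point requiring a moment's thought, and you handled it correctly.
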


To prove Theorem \ref{Triviality1 copy(1)}, we need some preparations.
Consider the partition of the unity $\{\mathcal{F}\varphi _{j}\}_{j\in 
\mathbb{N}_{0}}$. We define the convolution operators $\Lambda _{j}$\ by the
following: 
\begin{equation*}
\Lambda _{j}f=\varphi _{j}\ast f,\quad j\in \mathbb{N}\quad \text{and}\quad
\Lambda _{0}f=\mathcal{F}^{-1}\psi \ast f,\quad f\in \mathcal{S}^{\prime }(%
\mathbb{R}^{n}).
\end{equation*}%
We associate the convolution operator $Q_{j}$ defined as $Q_{j}f=\mathcal{F}%
^{-1}(\psi (2^{-j}\cdot ))\ast f,j\in \mathbb{N}$. We set $\Lambda
_{0}=Q_{0},$ thus we obtain $Q_{j}f=\sum_{i=0}^{j+1}\Lambda _{i}f,j\in 
\mathbb{N}$. For $f\in \mathcal{S}^{\prime }(\mathbb{R}^{n})$ we define the
product of these two distributions as%
\begin{equation}
T_{G}(f)=f^{2}=\lim_{j\rightarrow \infty }Q_{j}f\cdot Q_{j}f
\label{multiplication}
\end{equation}%
whenever this limit exists in $\mathcal{S}^{\prime }(\mathbb{R}^{n})$.
Related to this definition we introduce the following operators%
\begin{equation*}
\Pi _{1}(f)=\sum_{j=2}^{\infty }Q_{j-2}f\Lambda _{j}f,\quad \Pi
_{2}(f)=\sum_{j=0}^{\infty }\overline{\Lambda }_{j}f\Lambda _{j}f
\end{equation*}%
and $\Pi _{3}(f)=\Pi _{1}(f)$, with $\overline{\Lambda }_{j}=%
\sum_{k=j-1}^{j+1}\Lambda _{k},j\in \mathbb{N}_{0}$. The advantage of the
above decomposition consists in%
\begin{equation*}
\text{supp}\mathcal{F}(Q_{j-2}f\Lambda _{j}f)\subset \{\xi :2^{j-3}\leq |\xi
|\leq 2^{j+1}\},\quad j=2,3,...
\end{equation*}%
and%
\begin{equation*}
\text{supp}\mathcal{F}(\overline{\Lambda }_{j}f\Lambda _{j}f)\subset \{\xi
:|\xi |\leq 5\cdot 2^{j}\},\quad j\in \mathbb{N}_{0}.
\end{equation*}

The next lemma shows that the multiplication on the right-hand side of %
\eqref{multiplication} makes sense, but under some suitable assumptions.

\begin{lemma}
Let\ $1< p<\infty ,1\leq q\leq \infty ,0\leq \alpha <n(p-1)$ and $s>0$. Let $%
f\in A_{p,q}^{s}(\mathbb{R}^{n},|x|^{\alpha })$. Then $f^{2}$ is well
defined and $(Q_{j}f)^{2}$ tends to $f^{2}$ as $j$ tends to infinity.
\end{lemma}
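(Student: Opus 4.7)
The approach is to adapt Bony's paraproduct decomposition to the weighted setting. For each fixed $j$, the function $Q_j f = \mathcal{F}^{-1}(\psi(2^{-j}\cdot))\ast f$ is the convolution of the distribution $f\in\mathcal{S}'(\mathbb{R}^n)$ with a Schwartz function, hence a smooth function, so the pointwise product $(Q_j f)^2$ is a well-defined locally integrable function. Expanding
\begin{equation*}
(Q_j f)^2 = \sum_{k,l=0}^{j+1} \Lambda_k f \cdot \Lambda_l f
\end{equation*}
and splitting the double sum according to the three regions $k\leq l-2$, $|k-l|\leq 1$, and $k\geq l+2$, I identify the limits of the three partial sums with $\Pi_1(f)$, $\Pi_2(f)$, and $\Pi_3(f)$. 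Thus it suffices to show that each paraproduct converges in $\mathcal{S}'(\mathbb{R}^n)$ and that the remaining ``boundary'' terms vanish as $j\to\infty$.

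For $\Pi_1(f) = \sum_{j=2}^\infty Q_{j-2}f \cdot \Lambda_j f$, each summand has Fourier support in the annulus $\{2^{j-3}\leq|\xi|\leq 2^{j+1}\}$, which makes the series a Littlewood--Paley-type decomposition. I would estimate $\|Q_{j-2}f\cdot\Lambda_j f\|_{L^p(\mathbb{R}^n,|\cdot|^\alpha)}$ using a weighted H\"older inequality combined with Marschall's inequality (Lemma~\ref{Marschall-ineq}) to reduce to maximal-function quantities, then apply Lemmas~\ref{Fefferman-Stein} and~\ref{Fefferman-Stein copy(2)} to recover an $A^s_{p,q}(\mathbb{R}^n,|\cdot|^\alpha)$-norm of $f$. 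The hypothesis $s>0$ produces sufficient dyadic decay so that $\Pi_1(f)$ lies in some $A^{\sigma}_{p,q}(\mathbb{R}^n,|\cdot|^\alpha)$ (possibly with $\sigma<0$), hence in $\mathcal{S}'(\mathbb{R}^n)$. The treatment of $\Pi_3(f)$ is identical by symmetry.

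For $\Pi_2(f)=\sum_{j=0}^\infty \overline{\Lambda}_j f\cdot\Lambda_j f$ the Fourier supports $\{|\xi|\leq 5\cdot 2^j\}$ overlap, so convergence cannot be read off directly from a Littlewood--Paley decomposition. Instead, I would proceed by almost-orthogonality: applying $\Lambda_k$ to the partial sums, the Fourier-support constraint forces only terms with $j\geq k-c$ to contribute; I would then estimate each resulting $k$th block in $L^p(\mathbb{R}^n,|\cdot|^\alpha)$ by combining Marschall's inequality with Lemma~\ref{Fefferman-Stein copy(2)}. Once more, $s>0$ supplies the summability needed to place $\Pi_2(f)$ in a Besov space embedded in $\mathcal{S}'(\mathbb{R}^n)$.

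Finally I would verify that $(Q_j f)^2$ differs from the sum of the three level-$j$ partial sums by at most a controlled number of boundary terms concentrated at frequency scale $\sim 2^j$, and that these terms tend to $0$ in $\mathcal{S}'(\mathbb{R}^n)$ by the same family of estimates (pairing with a test function $\phi\in\mathcal{S}$ produces factors like $\widehat{\phi}(2^j\cdot)$ with rapid decay in $j$). The main obstacle is the analysis of $\Pi_2(f)$: the absence of Fourier-support disjointness forces one to implement the almost-orthogonality argument directly in the weighted $L^p$ framework, which requires a careful combined use of Lemmas~\ref{Fefferman-Stein copy(2)} and~\ref{Marschall-ineq} to translate paraproduct expressions into maximal-function expressions amenable to the weighted Fefferman--Stein inequality.
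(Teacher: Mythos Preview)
Your paraproduct route can be made to work, but the paper takes a far more elementary path. Since $s>0$ and $0\le\alpha<n(p-1)$, Theorem~\ref{embeddings} already gives $A^s_{p,q}(\mathbb{R}^n,|\cdot|^\alpha)\subset L^1_{\mathrm{loc}}(\mathbb{R}^n)$, so $f$ is a genuine function and the pointwise square $f^2$ makes sense from the outset. The paper then simply observes that $s>0$ forces $\sum_{i\ge0}\|\Lambda_i f\|_{L^p(\mathbb{R}^n,|\cdot|^\alpha)}<\infty$, so the Littlewood--Paley series converges in $L^p(\mathbb{R}^n,|\cdot|^\alpha)$ to some $g$; hence $Q_j f\to g$ almost everywhere, a two-line duality argument identifies $g=f$ a.e., and therefore $(Q_j f)^2\to f^2$ pointwise. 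No Fourier-support bookkeeping, no Marschall inequality, no almost-orthogonality for $\Pi_2$ is needed here. The machinery you set up---estimating $\Pi_1,\Pi_2,\Pi_3$ via Lemmas~\ref{Fefferman-Stein copy(2)} and~\ref{Marschall-ineq}---is exactly what the paper deploys one step later, in the proof of Theorem~\ref{Triviality1 copy(1)}, to show that $f^2$ lands back in $A^s_{p,q}(\mathbb{R}^n,|\cdot|^\alpha)$; for the present lemma it is substantial overkill, since once $f$ is known to be a function the distributional definition of $f^2$ reduces to the classical one.
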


\begin{proof}
From Theorem \ref{embeddings} the elements of $A_{p,q}^{s}(\mathbb{R}%
^{n},|x|^{\alpha })$ are regular distributions. So $f^{2}$ is well defined,
as an element of $L_{\mathrm{loc}}^{1}(\mathbb{R}^{n})$. Let us prove that $%
\{Q_{j}f\}_{j\in \mathbb{N}}$ converges to $f$ almost everywhere. Using $s>0$%
, we get 
\begin{equation*}
\sum_{i=0}^{\infty }\big\Vert\Lambda _{i}f\big\Vert_{L^{p}(\mathbb{R}%
^{n},|\cdot |^{\alpha })}\lesssim \big\Vert f\big\Vert_{A_{p,q}^{s}(\mathbb{R%
}^{n},|x|^{\alpha })}.
\end{equation*}%
Then the series $\sum_{i=0}^{\infty }\Lambda _{i}f$ converges in $L^{p}(%
\mathbb{R}^{n},|\cdot |^{\alpha })$ to a limit $g\in L^{p}(\mathbb{R}%
^{n},|\cdot |^{\alpha })$. Therefore $\{Q_{j}f\}_{j\in \mathbb{N}}$
converges to $g$ almost everywhere. Let $\varphi \in \mathcal{S}(\mathbb{R}%
^{n})$. We write%
\begin{equation*}
\langle f-g,\varphi \rangle =\langle f-Q_{j}f,\varphi \rangle +\langle
g-Q_{j}f,\varphi \rangle ,\quad j\in \mathbb{N}.
\end{equation*}%
Here $\langle \cdot ,\cdot \rangle $ denotes the duality bracket between $%
\mathcal{S}^{\prime }(\mathbb{R}^{n})$ and $\mathcal{S}(\mathbb{R}^{n})$.
The first term tends to zero as $j\rightarrow \infty $, while by H\"{o}%
lder's\ inequality there exists a constant $C>0$ independent of $j$ such
that 
\begin{equation*}
|\langle g-Q_{j}f,\varphi \rangle |\leq C\big\|g-Q_{j}f\big\|_{L^{p}(\mathbb{%
R}^{n},|\cdot |^{\alpha })},
\end{equation*}%
which tends to zero as $j\rightarrow \infty $. Therefore $f=g$ almost
everywhere. Consequently, $(Q_{j}f)^{2}$ tends to $f^{2}$ as $j$ tends to
infinity. The proof is complete.
 \end{proof}

\begin{remark}
For interested readers, we refer to \cite{J96} and \cite[Chapter 5]{RS96}
for more detailed discussion of \eqref{multiplication}.
\end{remark}

The next lemma is used in the proof of our Theorem \ref{Triviality1 copy(1)}%
, see \cite{BD} for the proof.

\begin{lemma}
\label{Key-lemma1}Let\textit{\ }$A,B>0,1<p<\infty ,1\leq q\leq \infty ,0\leq
\alpha <n(p-1)$ and $s>0$. Let $\left\{ f_{l}\right\} _{l\in \mathbb{N}_{0}}$
be a sequence of functions\ such that 
\begin{equation*}
\mathrm{supp}\mathcal{F}f_{l}\subseteq \left\{ \xi \in \mathbb{R}%
^{n}:\left\vert \xi \right\vert \leq A2^{l+1}\right\} ,\quad l\in \mathbb{N}%
_{0}.
\end{equation*}%
\textit{T}hen 
\begin{equation*}
\Big\|\sum_{l=0}^{\infty }f_{l}\Big\|_{B_{p,q}^{s}(\mathbb{R}^{n},|\cdot
|^{\alpha })}\lesssim \Big(\sum_{l=0}^{\infty }\,2^{lsq}\big\Vert f_{l}%
\big\Vert_{L^{p}(\mathbb{R}^{n},|\cdot |^{\alpha })}^{q}\Big)^{1/q}.
\end{equation*}%
and 
\begin{equation*}
\Big\Vert\sum_{l=0}^{\infty }f_{l}\Big\Vert_{F_{p,q}^{s}(\mathbb{R}%
^{n},|\cdot |^{\alpha })}\lesssim \Big\Vert\Big(\sum_{l=0}^{\infty
}2^{lsq}\left\vert f_{l}\right\vert ^{q}\Big)^{1/q}\Big\Vert_{L^{p}(\mathbb{R%
}^{n},|\cdot |^{\alpha })}.
\end{equation*}
\end{lemma}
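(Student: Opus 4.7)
My approach is to exploit the support condition on $\mathcal{F} f_l$ so that only $O(1)$ terms of the sum contribute to each Littlewood-Paley piece $\varphi_j \ast \sum_l f_l$, reducing the whole estimate to a discrete convolution whose kernel is $\ell^1$-summable thanks to $s > 0$. Concretely, the hypothesis on $\mathrm{supp}\,\mathcal{F} f_l$ together with the annular support of $\mathcal{F}\varphi_j$ forces $\varphi_j \ast f_l \equiv 0$ whenever $j > l + C_0$ with $C_0 := \lfloor \log_2 A \rfloor + 2$, and from the Schwartz decay of $\varphi_j$ (alternatively Lemma~\ref{Marschall-ineq} with $t = 1$) one has the uniform pointwise bound $|\varphi_j \ast f_l|(x) \lesssim \mathcal{M} f_l(x)$.

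Combining these two observations gives the pointwise inequality
\[
2^{js}\,\Bigl|\varphi_j \ast \sum_l f_l\Bigr|(x) \lesssim \sum_{l \geq j - C_0} 2^{(j-l)s}\,\bigl(2^{ls}\mathcal{M} f_l(x)\bigr),
\]
which is a discrete convolution in $j, l$ with kernel $K(m) = 2^{-ms}\mathbf{1}_{\{m \geq -C_0\}}$ satisfying $\|K\|_{\ell^1} < \infty$. Young's inequality on $\ell^q$, valid for every $1 \leq q \leq \infty$, then yields
\[
\Bigl(\sum_j 2^{jsq}\Bigl|\varphi_j \ast \sum_l f_l\Bigr|^q\Bigr)^{1/q}(x) \lesssim \Bigl(\sum_l 2^{lsq}(\mathcal{M} f_l(x))^q\Bigr)^{1/q},
\]
with the analogous $L^p(|\cdot|^\alpha)$-version following by Minkowski for the Besov case.

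It remains only to remove the maximal operator. For the Besov assertion, Lemma~\ref{Fefferman-Stein} applied scalarly gives $\|\mathcal{M} f_l\|_{L^p(|\cdot|^\alpha)} \lesssim \|f_l\|_{L^p(|\cdot|^\alpha)}$, which, combined with the $\ell^q$ Young step above, finishes the argument. For the Triebel-Lizorkin assertion in the range $1 < q < \infty$, taking the $L^p(|\cdot|^\alpha)$-norm in the pointwise inequality and invoking the vector-valued Fefferman-Stein bound (Lemma~\ref{Fefferman-Stein copy(2)}) closes the argument at once. The main obstacle is the $F$-case at the endpoints $q \in \{1, \infty\}$, where Lemma~\ref{Fefferman-Stein copy(2)} does not apply: for $q = \infty$ one uses monotonicity of $\mathcal{M}$, i.e.\ $\sup_l \mathcal{M}(2^{ls} f_l) \leq \mathcal{M}(\sup_l 2^{ls}|f_l|)$, and reduces to the scalar inequality; for $q = 1$ one replaces $\mathcal{M}$ by the subcritical maximal operator $\mathcal{M}_r$ with $r \in (0, 1)$ (whose bound $|\varphi_j \ast f_l| \lesssim \mathcal{M}_r f_l$ is supplied by Lemma~\ref{Marschall-ineq}) and applies Lemma~\ref{Fefferman-Stein copy(2)} to $(\mathcal{M}|f_l|^r)^{1/r}$ in $L^{p/r}(|\cdot|^\alpha)$ with index $q/r > 1$, which is legitimate since $|\cdot|^\alpha \in \mathcal{A}_p \subset \mathcal{A}_{p/r}$.
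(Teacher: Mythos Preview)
The paper does not supply its own proof of this lemma; it simply refers to \cite{BD}. Your argument is the standard one for such ``almost-orthogonality'' estimates and is essentially correct, so there is nothing to compare against in the paper itself.

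One small inaccuracy worth flagging: in your treatment of the Triebel--Lizorkin endpoint $q=1$ you claim that Lemma~\ref{Marschall-ineq} with $t=r<1$ yields the uniform bound $|\varphi_j\ast f_l|\lesssim\mathcal{M}_r f_l$. It does not. The right-hand side of Lemma~\ref{Marschall-ineq} carries the factor $(AR)^{n/r-n}\,\|\mathcal{F}\varphi_j\|_{\dot B_{1,r}^{n/r}}$, and since $\mathcal{F}\varphi_j=\mathcal{F}\varphi_1(2^{1-j}\cdot)$ while $AR\approx 2^{l}$, this factor scales like $2^{(l-j)(n/r-n)}$, which is unbounded as $l-j\to\infty$. (Note also that $\mathcal{M}_r\leq\mathcal{M}$ for $r<1$ by Jensen's inequality, so one cannot simply pass from the $\mathcal{M}$-bound to an $\mathcal{M}_r$-bound.) The fix is routine: absorb this extra factor into the convolution kernel, obtaining $K(m)=2^{m(n/r-n-s)}\mathbf{1}_{\{m\geq -C_0\}}$, and choose $r\in\bigl(\tfrac{n}{n+s},1\bigr)$ so that $K\in\ell^1$; this is possible because $s>0$. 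With that choice your Young-inequality step and the subsequent application of Lemma~\ref{Fefferman-Stein copy(2)} in $L^{p/r}(|\cdot|^\alpha)$ with inner index $1/r>1$ go through exactly as you describe.
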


The following lemma is particular case of the Plancherel-Polya-Nikolskij
inequality given in \cite{Triebel83}.

\begin{lemma}
\label{Bernstein-Herz-ine1}\textit{Let} $R>0$ and $1\leq p\leq \infty $. 
\textit{Then there exists a positive constant }$c>0$\textit{\ independent of 
}$R$\textit{\ such that for all }$f\in L^{p}$\textit{\ with }$\mathrm{supp}$ 
\textit{\ }$\mathcal{F}f\subset \{\xi \in \mathbb{R}^{n}:|\xi |\leq R\}$%
\textit{, we have} 
\begin{equation*}
\big\|f\big\|_{\infty }\leq c\text{ }R^{\frac{n}{p}}\big\|f\big\|_{p}.
\end{equation*}
\end{lemma}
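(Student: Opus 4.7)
The plan is to exploit the band-limited hypothesis by convolving $f$ with a Schwartz reproducing function, then applying H\"{o}lder's inequality and a scaling computation. First I would pick a fixed function $\phi \in \mathcal{S}(\mathbb{R}^n)$ such that $\mathcal{F}\phi(\xi) = 1$ on the ball $\{|\xi|\leq 1\}$ (for instance taking $\mathcal{F}\phi = \psi$ from the paper's dyadic resolution), and set $\phi_R(x) = R^n \phi(R x)$, so that $\mathcal{F}\phi_R(\xi) = \mathcal{F}\phi(\xi/R)$ equals $1$ on $\{|\xi|\leq R\}$.

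The second step is to observe that since $\mathrm{supp}\,\mathcal{F}f \subset \{|\xi|\leq R\}$, one has $\mathcal{F}\phi_R \cdot \mathcal{F}f = \mathcal{F}f$, and hence $f = \phi_R \ast f$ in $\mathcal{S}'(\mathbb{R}^n)$; since $\phi_R \ast f$ is continuous (being the convolution of a Schwartz function with an $L^p$ function), the identity also holds pointwise almost everywhere. The third step is H\"{o}lder's inequality: for every $x\in\mathbb{R}^n$,
\begin{equation*}
|f(x)| = |\phi_R \ast f(x)| \leq \big\|\phi_R\big\|_{p'}\,\big\|f\big\|_p,
\end{equation*}
where $p'$ is the conjugate exponent of $p$ (with the usual conventions $p'=1$ when $p=\infty$ and $p'=\infty$ when $p=1$).

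The fourth step is the scaling identity. A direct change of variables $y = Rx$ gives
\begin{equation*}
\big\|\phi_R\big\|_{p'} = R^{n}\Big(\int_{\mathbb{R}^n} |\phi(Rx)|^{p'}\,dx\Big)^{1/p'} = R^{n-n/p'}\big\|\phi\big\|_{p'} = R^{n/p}\,\big\|\phi\big\|_{p'},
\end{equation*}
with the obvious modification when $p'=\infty$ (then $\|\phi_R\|_\infty = R^n \|\phi\|_\infty = R^{n/p}\|\phi\|_\infty$, corresponding to $p=1$). Combining these steps yields the claim with $c = \|\phi\|_{p'}$, which is finite because $\phi\in\mathcal{S}(\mathbb{R}^n)$.

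I do not expect any serious obstacle: the proof is entirely classical. The only point requiring a little care is justifying $f = \phi_R \ast f$ when $f\in L^p$ with $p=\infty$ (where $f$ is only a tempered distribution a priori in the Fourier picture), but this follows at once from $\mathcal{F}\phi_R \equiv 1$ on the spectral support of $f$. The endpoint $p=\infty$ itself is trivial since then $R^{n/p}=1$ and the inequality reduces to $\|f\|_\infty \leq \|\phi\|_1\|f\|_\infty$.
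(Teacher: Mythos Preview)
Your proof is correct and entirely classical. The paper does not actually supply a proof of this lemma: it simply cites \cite{Triebel83} (``the following lemma is a particular case of the Plancherel--Polya--Nikolskij inequality given in \cite{Triebel83}''), so there is nothing to compare against beyond noting that your reproducing-kernel plus H\"{o}lder argument is exactly the standard proof one finds in that reference.
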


\begin{proof}[\textbf{\ Proof of Theorem \protect\ref{Triviality1 copy(1)}}]
	
	Let $f\in F_{p,q}^{s}(\mathbb{R}^{n},|x|^{\alpha })$. Recall that%
	\begin{equation*}
	T_{G}(f)=\Pi _{1}(f)+\Pi _{2}(f)+\Pi _{3}(f).
	\end{equation*}%
	Hence we need only to estimate $\Pi _{i}(f),i=1,2$. Lemma \ref{Key-lemma1}
	yields that $\big\Vert\Pi _{1}(f)\big\Vert_{F_{p,q}^{s}(\mathbb{R}%
		^{n},|x|^{\alpha })}$ is bounded by%
	\begin{align*}
		&c\Big\Vert\Big(\sum_{j=2}^{\infty }2^{sjq}\left\vert Q_{j-2}f\Lambda
		_{j}f\right\vert ^{q}\Big)^{1/q}\Big\Vert_{L^{p}(\mathbb{R}%
			^{n},|x|^{\alpha })} \\
		&\lesssim \Big\Vert\sup_{j=2,3,...}|Q_{j-2}f|\Big(\sum_{j=2}^{\infty
		}2^{sjq}\left\vert \Lambda _{j}f\right\vert ^{q}\Big)^{1/q}\Big\Vert%
		_{L^{p}(\mathbb{R}^{n},|x|^{\alpha })} \\
		&\lesssim \sup_{j=2,3,...}\big\Vert Q_{j-2}f\big\Vert_{\infty }\big\Vert f%
		\big\Vert_{F_{p,q}^{s}(\mathbb{R}^{n},|x|^{\alpha })}.
	\end{align*}%
	Let $1\leq p<t<\infty $. From Lemma \ref{Bernstein-Herz-ine1} it follows that%
	\begin{align*}
		\big\Vert Q_{j-2}f\big\Vert_{\infty } &\leq \sum_{i=0}^{j-1}\big\Vert\Lambda
		_{i}f\big\Vert_{\infty }\lesssim \sum_{i=0}^{j-1}2^{i\frac{n}{t}}\big\Vert%
		\Lambda _{i}f\big\Vert_{t} \\
		&\lesssim \big\Vert f\big\Vert_{B_{t,1}^{\frac{n}{t}}(\mathbb{R}^{n})},
	\end{align*}%
	where the implicit constant is independent of $j\geq 2$. Using the embeddings%
	\begin{equation}
	F_{p,\infty }^{s}(\mathbb{R}^{n},|x|^{\alpha })\hookrightarrow B_{p,1}^{%
		\frac{n+\alpha }{p}}(\mathbb{R}^{n},|x|^{\alpha })\hookrightarrow B_{t,1}^{%
		\frac{n}{t}}(\mathbb{R}^{n}),  \label{embeddings1}
	\end{equation}%
	see Theorem \ref{embeddings5}, we get 
	\begin{equation*}
	\big\Vert\Pi _{1}(f)\big\Vert_{F_{p,q}^{s}(\mathbb{R}^{n},|x|^{\alpha
		})}\leq \big\Vert f\big\Vert_{F_{p,q}^{s}(\mathbb{R}^{n},|x|^{\alpha })}^{2}.
	\end{equation*}%
	Again, by Lemmas \ref{Key-lemma1}-\ref{Bernstein-Herz-ine1} we obtain that $%
	\big\Vert\Pi _{2}(f)\big\Vert_{F_{p,q}^{s}(\mathbb{R}^{n},|x|^{\alpha })}$
	is bounded by%
	\begin{align*}
		&c\Big\Vert\Big(\sum_{j=0}^{\infty }2^{sjq}\left\vert \overline{\Lambda }%
		_{j}f\Lambda _{j}f\right\vert ^{q}\Big)^{1/q}\Big\Vert_{L^{p}(\mathbb{%
				R}^{n},|x|^{\alpha })} \\
		&\lesssim \Big\Vert\sup_{j\mathbb{\in N}_{0}}|\overline{\Lambda }_{j}f|\Big(%
		\sum_{j=0}^{\infty }2^{sjq}\left\vert \Lambda _{j}f\right\vert ^{q}\Big)^
			{1/q}\Big\Vert_{L^{p}(\mathbb{R}^{n},|x|^{\alpha })} \\
		&\lesssim \sup_{j\mathbb{\in N}_{0}}\big\Vert\overline{\Lambda }_{j}f%
		\big\Vert_{\infty }\big\Vert f\big\Vert_{F_{p,q}^{s}(\mathbb{R}%
			^{n},|x|^{\alpha })} \\
		&\lesssim \big\Vert f\big\Vert_{F_{p,q}^{s}(\mathbb{R}^{n},|x|^{\alpha
			})}^{2}.
	\end{align*}%
	Let $f\in B_{p,q}^{s}(\mathbb{R}^{n},|x|^{\alpha })$. The proof follows by
	the same arguments above, but we use the embeddings 
	\begin{equation*}
	B_{p,q_{1}}^{s}(\mathbb{R}^{n},|x|^{\alpha })\hookrightarrow B_{p,1}^{\frac{%
			n+\alpha }{p}}(\mathbb{R}^{n},|x|^{\alpha })\hookrightarrow B_{t,1}^{\frac{n%
		}{t}}(\mathbb{R}^{n}),
	\end{equation*}%
	instead of $\mathrm{\eqref{embeddings1}}$, where%
	\begin{equation*}
	q_{1}=\left\{ 
	\begin{array}{ccc}
	1, & \text{if} & s=\frac{n+\alpha }{p} \\ 
	q, & \text{if} & s>\frac{n+\alpha }{p}.%
	\end{array}%
	\right. 
	\end{equation*}%
	The proof is complete.
\end{proof}

\begin{remark}
Let\ $1< p<\infty ,1\leq q\leq \infty \ $and $0\leq \alpha <n(p-1)$. Suppose
that $s>\frac{n+\alpha }{p}$ or 
\begin{equation*}
s=\frac{n+\alpha }{p}\quad \text{and}\quad q=1
\end{equation*}
in the case of Besov spaces $B_{p,q}^{s}(\mathbb{R}^{n},|\cdot |^{\alpha })$
. By similar arguments of Theorem \ref{Triviality1 copy(1)} we obtain that
the spaces $A_{p,q}^{s}(\mathbb{R}^{n},|x|^{\alpha })$ are algebras with
respect to pointwise multiplication.
\end{remark}

\subsection*{Acknowledgements}

We thank the referees for carefully reading the paper and for making several
useful suggestions and comments, which improved the exposition of the paper
substantially. This work is found by the General Direction of Higher
Education and Training under Grant No. C00L03UN280120220004 and by The
General Directorate of Scientific Research and Technological Development,
Algeria.


\begin{thebibliography}{99}
\bibitem{AJ81} K. Andersen and R. John, \textit{Weighted inequalities for
vector-valued maximal functions and singular integrals}, Studia Math. 69
(1980/1981), 19-31.

\bibitem{BK} G. Bourdaud and D. Kateb, \textit{Fonctions qui op\`{e}rent sur
certains espaces de Besov}, Ann. Inst. Fourier. 40 (1990), 153-162.

\bibitem{Bo91} G. Bourdaud, \textit{Le calcul fonctionnel dans les espaces
de Sobolev}, lnvent. Math. 104 (1991), 435-446.

\bibitem{Bo93} G. Bourdaud, \textit{Fonctions qui op\'{e}rent sur les
espaces de Besov et de Triebel}, Annales de L'I.H.P. (Analyse non lin\'{e}%
aire) 10 (1993), 413-422.

\bibitem{Bo931} G. Bourdaud, \textit{The functional calculus in Sobolev
spaces.} In: Schmeisser HJ., Triebel H. (eds) Function spaces, differential
operators and nonlinear analysis. Teubner-Texte Math. 133, Teubner,
Stuttgart, Leipzig, 1993, 127-142.

\bibitem{BCS06} G. Bourdaud, M. Lanza de Cristoforis and W. Sickel, \textit{%
\ \ Superposition operators and functions of bounded }$p$\textit{-variation}%
, Revista Mat. Iberoamericana. 22 (2006), 455-487.

\bibitem{Bo10} G. Bourdaud, \textit{Superposition in homogeneous and vector
valued Sobolev spaces}, Trans. Amer. Math. Soc. 362 (2010), 6105-6130.

\bibitem{BMS10} G. Bourdaud, M. Moussai and W. Sickel, \textit{Composition
operators in Lizorkin-Triebel spaces}, J. Funct. Anal. 259 (2010), 1098-1128.

\bibitem{BD} H. Brahim Boulares and D. Drihem, \textit{Multiplication on
Besov and Triebel-Lizorkin spaces of power weights}, Funct. Approx. Comment.
Math. Accepted.

\bibitem{Bui82} H.Q. Bui, \textit{Weighted Besov and Triebel spaces:
Interpolation by the real method}, Hiroshima Math. J, 12 (1982), 581-605.

\bibitem{CFZ11} T. Cazenave, D. Fang and H. Zheng, \textit{Continuous
dependence for NLS in fractional order spaces}, Ann. I. H. Poincar\'{e}\ --
AN 28 (2011), 135-147.

\bibitem{Da79} B.J. Dahlberg, \textit{A note on Sobolev spaces}, Proc. Symp.
Pure Math, 35(1) (1979), 183-185.

\bibitem{DjDr} A. Djeriou and D. Drihem. \textit{On the continuity of
pseudo-differential operators on multiplier spaces associated to Herz-type
Triebel--Lizorkin spaces,} Mediterr. J. Math. 16, 153 (2019).
https://doi.org/10.1007/s00009-019-1418-7

\bibitem{Drihem2013a} D. Drihem, \textit{Embeddings properties on Herz-type
Besov and Triebel-Lizorkin spaces}, Math. Inequal. Appl, 16 (2) (2013),
439-460.

\bibitem{Drihem20} D. Drihem, \textit{Caffarelli-Kohn-Nirenberg inequalities
on Besov and Triebel-Lizorkin-type spaces}, arXiv:1808.08227 (2020).

\bibitem{Dr21} D. Drihem, \textit{Nemytzkij operators on Sobolev spaces with
power weights: I, } J. Math. Sci (2022).
https://doi.org/10.1007/s10958-022-05895-9.

\bibitem{DrBanach} D. Drihem, \textit{Composition operators on Herz-type
Triebel-Lizorkin spaces with application to semilinear parabolic equations, }
Banach J. Math. Anal. 16, 29 (2022).

\bibitem{F98} F. Ribaud, \textit{Cauchy problem for semilinear parabolic
equations with initial data in }$H_{p}^{s}(\mathbb{R}^{n})$\textit{\ spaces, 
}Rev. Mat. Iberoamericana 14(1) (1998), 1-46.

\bibitem{He90} H. Heuser, \textit{Lehrbuch der Analysis,} Vol. 1. Teubner,
Stuttgart, 1990.

\bibitem{Igari65} S. Igari, \textit{Sur les fonctions qui op\`{e}rent sur
l'espace }$\hat{A}^{2}$, Ann. Inst. Fourier (Grenoble) 15 (1965), 525-536.

\bibitem{J96} J. Johnsen, \textit{Pointwise multiplication of Besov and
Triebel-Lizorkin spaces}, Math. Nachr. 175 (1995), 85-133.

\bibitem{Ko} V.M. Kokilashvili, \textit{Maximum inequalities and multipliers
in weighted Lizorkin-Triebel spaces}, Dokl. Akad. Nauk SSSR. 239, (1978)
42-45. In Russian.

\bibitem{MM} M. Marcus and V.J. Mizel, \textit{Complete characterization of
functions which act, via superposition, on Sobolev spaces}, Trans. Amer.
Math. Soc. 251 (1979), 187-218.

\bibitem{Mar91Forum} J. Marschall, \textit{Weighted parabolic Triebel spaces
of product type. Fourier multipliers and pseudo-differential operators},
Forum Math. 3 (1991), 479-511.

\bibitem{MM12} M. Meyries and M.C. Veraar, \textit{Sharp embedding results
for spaces of smooth functions with power weights}, Studia. Math. 208 (3)
(2012), 257-293.

\bibitem{Ru86} T. Runst, \textit{Mapping properties of non-linear operators
in spaces of Triebel-Lizorkin and Besov type}, Anal. Math. 12 (1986),
313-346.

\bibitem{RS96} T. Runst and W. Sickel, \textit{Sobolev spaces of fractional
order, Nemytskij operators, and nonlinear partial differential equations,}
Walter de Gruyter, Berlin, New York, 1996.

\bibitem{Sawano18} Y. Sawano, \textit{Theory of Besov spaces,} Developments
in Math. 56, Springer, Singapore, 2018.

\bibitem{Si97} W. Sickel, \textit{Necessary conditions on composition
operators acting on Sobolev spaces of fractional order. The critical case }$%
1<s<n/p$, Forum Math. 9 (1997), 267-302.

\bibitem{Si98} W. Sickel, \textit{Conditions on composition operators which
map a space of Triebel-Lizorkin type into a Sobolev space. The case }$%
1<s<n/p $\textit{. II}, Forum Math. 10 (1998), 199-231.

\bibitem{Si98-1} W. Sickel, \textit{Necessary conditions on composition
operators acting between Besov spaces. The case }$1<s<n/p$.\textit{\ III,}
Forum Math. 10 (1998), 303-327.

\bibitem{St93} E. Stein, \textit{Harmonic analysis: real-variable methods,
orthogonality, and oscillatory integrals.} With the assistance of Timothy S.
Murphy. Princeton Mathematical Series, 43. Monographs in Harmonic Analysis,
III. Princeton University Press, Princeton, NJ, 1993.

\bibitem{Triebel83} H. Triebel, \textit{Theory of function spaces, }Birkh%
\"{a}user, Basel, 1983.

\bibitem{Triebel92} H. Triebel, \textit{Theory of function spaces II,} Birkh%
\"{a}user, Basel, 1992.

\bibitem{XuYang03} J. Xu and D. Yang, \textit{Applications of Herz-type
Triebel-Lizorkin spaces}, Acta Math. Sci., Ser. B 23 (2003), 328-338.

\bibitem{SiYY} W. Yuan, W. Sickel and D. Yang, \textit{Morrey and Campanato
Meet Besov, Lizorkin and Triebel}, Lecture Notes in Mathematics, vol. 2005,
Springer-Verlag, Berlin 2010.
\end{thebibliography}
\end{document}